\newcommand{\R}{\mathbb{R}}
\newcommand{\Z}{\mathbb{Z}}
\newcommand{\F}{\mathbb{F}}
\newcommand{\CP}{\mathbb{CP}}
\newcommand{\gen}[1]{\langle #1\rangle}
\DeclareMathOperator{\id}{I}
\DeclareMathOperator{\Cl}{C\ell}
\DeclareMathOperator{\G}{G}
\DeclareMathOperator{\abs}{abs}
\DeclareMathOperator{\cok}{cok}
\DeclareMathOperator{\GR}{\mathcal{GR}}
\newtheorem{theorem}{Theorem}[section]
\newtheorem{lemma}[theorem]{Lemma}
\newtheorem{proposition}[theorem]{Proposition}
\newtheorem{corollary}[theorem]{Corollary}
\theoremstyle{definition}
\newtheorem{definition}[subsection]{Definition}
\theoremstyle{remark}
\newtheorem{remark}[subsection]{Remark}
\newtheorem{example}[subsection]{Example}
\begin{document}

\title{The Signed Monodromy Group of an Adinkra}
\author[E. Goins]{Edray Goins}
\address{Pomona College,
	Department of Mathematics,
	610 North College Avenue,
	Claremont, CA 91711}
\email{edray.goins@pomona.edu}
\author[K. Iga]{Kevin Iga}
\address{
	Brown University,
	Brown Theoretical Physics Center,
	340 Brook St.,
	Providence, RI  02912
	and
	Pepperdine University,
	Natural Science Division,
	24255 Pacific Coast Hwy.,
  	Malibu CA 90263-4321
}
\email{kiga@pepperdine.edu}
\author[J. Kostiuk]{Jordan Kostiuk}
\address{Brown University,
	Box 1917,
	151 Thayer St.,
	Providence RI 02912}
\email{jordan\_kostiuk@brown.edu}
\author[K. Stiffler]{Kory Stiffler}
\address{
	Brown University,
	Brown Theoretical Physics Center,
	340 Brook St.,
	Providence, RI  02912}
\email{kory\_stiffler@brown.edu}
\date{September 3, 2019}
\maketitle
\begin{abstract}
An ordering of colours in an Adinkra leads to an embedding of this Adinkra into a Riemann surface $X$, and a branched covering map $\beta_X:X\to\CP^1$.   This paper shows how the dashing of edges in an Adinkra determines a signed permutation version of the monodromy group, and shows that it is isomorphic to a Salingaros Vee group.
\end{abstract}

\subjclass{Primary: 14H57; Secondary: 05C25, 20B, 81T60, 14H30.}
\keywords{Adinkra, Belyi map, monodromy, signed permutation, Salingaros vee group, Clifford algebra}

\section{Introduction}
An Adinkra is a bipartite directed graph, together with various markings (each edge is coloured from among $N$ colours, and is either drawn with a solid or dashed line), subject to a certain list of conditions \cite{zhang}.  Adinkras arise from representations of the supersymmetry algebra from physics \cite{rA}.  In Ref.~\cite{geom1}, it was shown that given an Adinkra, and a cyclic ordering of the colours, there is an embedding of the Adinkra into a Riemann surface $X$, and a branched covering map $\beta_X:X\to \CP^1$, branched over the $N$th roots of $-1$.

One important approach in studying branched covers is the monodromy group.   This is the set of permutations of the points $\beta_X{}^{-1}(\infty)=\{p_1,\ldots,p_d\}$, resulting from loops in $\CP^1$ based at $\infty$ that avoid the branch points.  The monodromy group of this branched covering map turns out to be isomorphic to a group of the form $\F_2^m$, where $2^m=d$ \cite{geom1}.

The edges of an Adinkra are either solid or dashed.  This can be used to turn these permutations into signed permutations, meaning that we formally invent objects $\{-p_1,\ldots,-p_d\}$, and instead of $p_i$ going to $p_j$, it might go to $-p_j$, if there is an odd number of dashed edges that are involved in the corresponding path.  In this way we end up with a signed permutation group.

In this paper we calculate this signed permutation group for any connected Adinkra, and prove them to be the {\em Vee groups} $G_n$ due to Nikos Salingaros in Ref.~\cite{salingaros1}.

This will give a more general context for the appearance of the quaternionic group $Q_8$ (which is isomorphic to $\G_2$) in the case $N=4$ with code $\gen{1111}$, as described in \cite{Korrals}.

This distinction between solid or dashed edges corresponds naturally to the meaning that this distinction has in the representation of the $1$-dimensional supersymmetric Poincar\'e algebra, where Adinkras were first discussed.  The signed monodromy group also is the natural place for the {\em holoraumy tensors} $V_{IJ}$ and $\tilde{V}_{IJ}$ as defined in Ref.~\cite{HoLoRmY1,HoLoRmY2}.

We begin in Section~\ref{sec:review} by reviewing Adinkras, the Riemann surface $X$, the Bely\u{\i} map $\beta_X:X\to\CP^1$, and the monodromy group.  Section~\ref{sec:signed_perms} introduces the mathematics of signed permutations, and Section~\ref{sec:SMG} defines the signed monodromy group for an Adinkra.  

The signed monodromy group will be defined in Section~\ref{sec:SMG}, the Salingaros Vee group $\G_n$ will be defined in Section~\ref{sec:vee}, and the main theorem, which computes the signed monodromy group, will be stated in Section~\ref{sec:main}.  The proof of this will be in Sections~\ref{sec:proof1} through \ref{sec:proof2}.  There is an application to relations in the $\GR(d,N)$ algebra in Section~\ref{sec:relations}.

\section{Background}
\label{sec:review}
We begin by summarizing several concepts about Adinkras and the corresponding Riemann surfaces and Bely\u{\i} maps.  A more thorough introduction can be found in Refs.~\cite{rA,r6-1,at,zhang}.

\subsection{Adinkras}
To aid in studying $\GR(d,N)$ algebras, M. Faux and S. J. Gates introduced diagrams called {\em Adinkras} \cite{rA}.  
A mathematically rigorous definition of Adinkras is built in steps, as described in \cite{zhang}:
\begin{definition}
	\begin{itemize}
	\item An $N$-dimensional \emph{Adinkra topology} is a bipartite $N$-regular graph; we call the two sets in the bipartition \emph{bosons}  and \emph{fermions}, and colour them white and black respectively.
	\item An Adinkra \emph{chromotopology} is an Adinkra topology for which the set of edges are $N$-coloured, with each vertex incident with one edge of each colour, and the subset of edges consisting of two distinct colours forms a disjoint union of $4$-cycles---these special cycles are known as \emph{$2$-coloured $4$-cycles}.
	\item An \emph{Adinkra} is an Adinkra chromotopology equipped with two additional structures: an \emph{odd-dashing}---a dashing of the edges for which there is an odd number of dashed edges in each $2$-coloured $4$-cycle---and a \emph{height assignment}, which is a ranked poset structure on the vertices described by a $\mathbb{Z}$-valued function on the vertex set subject to certain constraints.
	\end{itemize}
\end{definition}

One example of an Adinkra is the $N$-dimensional Hamming cube, with vertex set $\{0,1\}^N=\F_2^N$.  
If $v$ and $w$ are vertices that are identical except in a single coordinate (say the $i$th coordinate), then there is an edge of colour $i$ connecting $v$ and $w$.  
Given a vertex $(x_1,\ldots,x_N)$, it is a boson if $\sum x_i$ is even, and it is a fermion if the sum is odd.  
The integer labeling of that vertex is given by $\sum x_i$.  
An edge of colour $i$ connecting $(x_1,\ldots,x_N)$ with $(y_1,\ldots,y_N)$ is solid if $\sum_{j=1}^{i-1} x_i$ is even and is dashed if the sum is odd (note that by definition, $x_j=y_j$ for all $j\not=i$).  See Figure~\ref{fig:adinkra40}.

Other interesting graphs can be obtained by quotienting the Hamming cube by some linear subspace of $\F_2^N$, also called a \emph{linear code}.

\begin{figure}
	\center 
	\includegraphics[width=\textwidth]{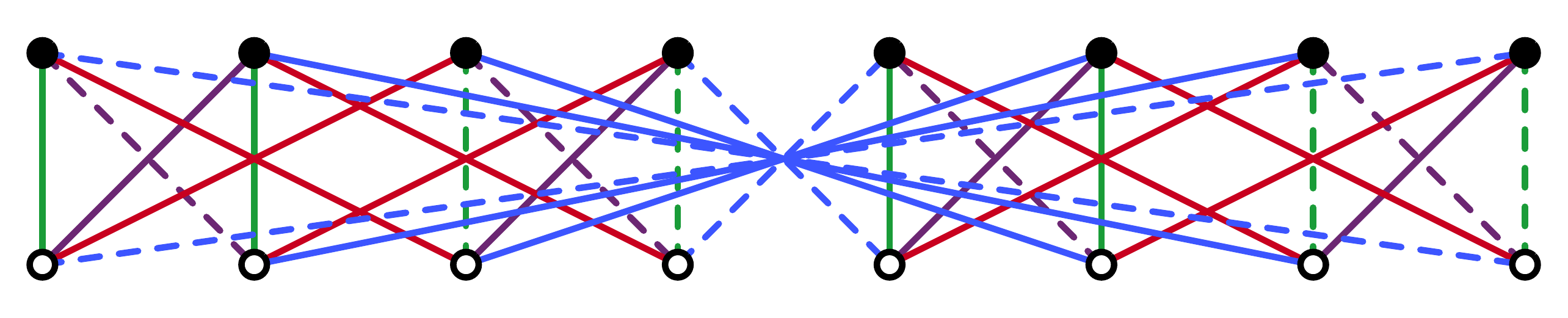}
\caption{An $N=4$ Adinkra with the topology of a Hamming $4$-cube.  Bosons are the white nodes, and fermions are black.  Note the colouring and dashing of the edges.  The height assignment here, shown literally by height on the page, puts all bosons at height $0$ and all fermions at height $1$.}
\label{fig:adinkra40}
\end{figure}

\begin{figure}
	\center 
\includegraphics[width=0.46\textwidth]{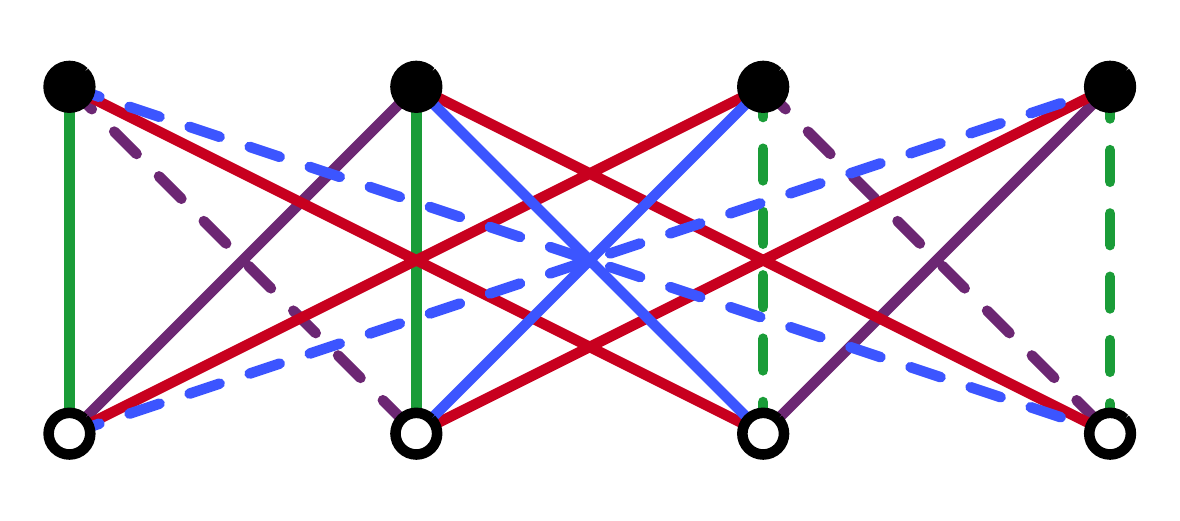}
\caption{The quotient of the $4$-cube Adinkra by the doubly even code generated by $\gen{1111}$.  This can be obtained from the Adinkra in Figure~\ref{fig:adinkra40} by identifying each of the four leftmost bosons (and the four leftmost fermions) with the boson (resp. fermion) four nodes to its right.}
\label{fig:adinkra41}
\end{figure}

A code is a vector space over $\F_2$, and so has a dimension, which we will call $k$.  The number of elements of the code will be $2^k$, and the number of vertices in the quotient is $2^{N-k}$.

This process preserves the bipartition of the vertices if the code is even, (meaning that the number of $1$s in every element is even) and can be made compatible with the dashing condition conditions above if the code is doubly even (meaning that the number of $1$s in every element is a multiple of four) \cite{at}.
That is, we can construct an Adinkra out a doubly even linear code. 
Conversely, every connected Adinkra arises in this fashion \cite{at}.
Since every Adinkra is a disjoint union of such connected components, the topology of Adinkras reduces to knowledge of the doubly even codes.  One example is the bottom Adinkra in Figure~\ref{fig:adinkra41}, which has $N=4$ and uses the code $\{0000,1111\}$.  This has $N=4$ and $k=1$, and is obtained by identifying each boson, and each fermion, with a corresponding boson (resp. fermion) that is diametrically opposite it on the $4$-cube.  In the Figure, this is obtained from the top Adinkra by identifying each of the four leftmost bosons (and the four leftmost fermions) with the boson (resp. fermion) four nodes to its right.

\subsection{The Riemann surface for an Adinkra}
\label{sec:belyi}
In this section we review the material in Ref.~\cite{geom1,geom2}.

Consider an Adinkra.  A {\em rainbow} is a cyclic ordering of the $N$ colours.  An Adinkra and a rainbow give rise to a Riemann surface $X$.  This is done by attaching a square to every cycle of four edges that alternate between two colours, both of which are adjacent in the rainbow.  The Riemann surface $X$ is connected if and only if the Adinkra is.  If $X$ is connected, its genus is
\[g=1+(N-4)\frac{d}{4},\]
where $d$ be the number of bosons (which is equal to the number of fermions).  For a connected Adinkra, which is the quotient of $\F_2^N$ by a code of dimension $k$,
\[d=2^{n-k-1}.\]

There is also an order $d$  branched covering map $\beta_X:X\to \CP^1$, branched over the $N$th roots of $-1$, $\xi_j=e^{\frac{(2j-1)\pi}{N}}$ for $j=1,\ldots, N$, and the covering map is ramified to order 2 around each of these $\xi_j$.  In $\CP^1$, we draw a single white vertex at $0$, a single black vertex at $\infty$, and $N$ edges from $0$ to $\infty$, one for each colour joining the two vertices.  These edges are parameterized by 
\[z_j(t)=te^{\frac{2(j-1)\pi i}{N}},\ 0\leq t\leq \infty,\]
and form the rays joining $0$ to $\infty$ making an angle $\frac{2(j-1)\pi}{N}$ with the real axis.  This $\CP^1$, together with these markings, we call a {\em beachball} (see Figure~\ref{fig:beachballn5}).

\begin{figure}
\begin{center}
	\includegraphics[width=3in]{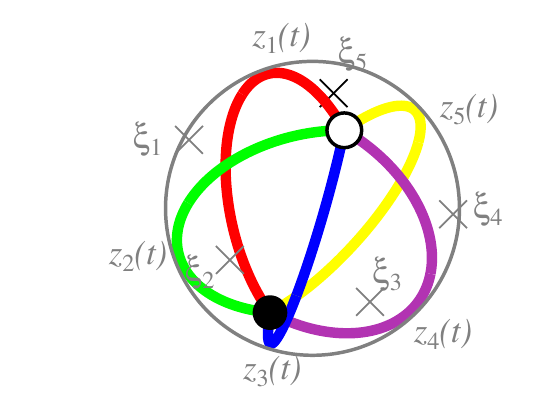}
\end{center}
\caption{The beachball for $N=5$.  The white node is the boson at $0$, the black node is the fermion at $\infty$, the coloured edges are the $z_j(t)$, and the $\times$ are at the $\xi_j$, where the covering map $\beta_X$ is ramified.}
\label{fig:beachballn5}
\end{figure}

\begin{figure}
\begin{center}
\includegraphics[width=2in]{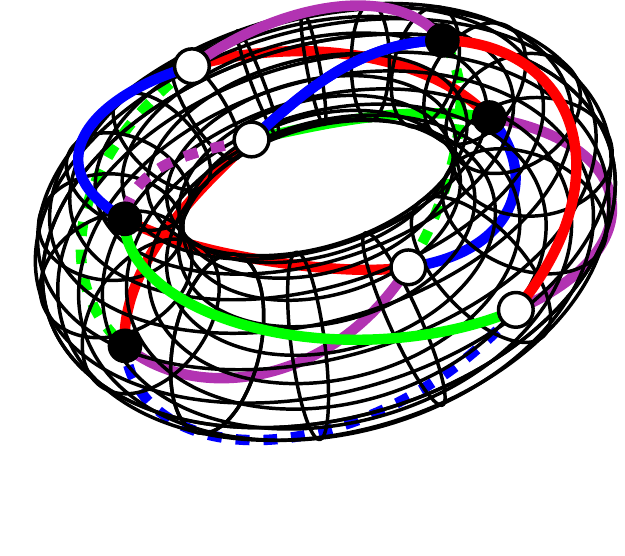}
\includegraphics[width=2in]{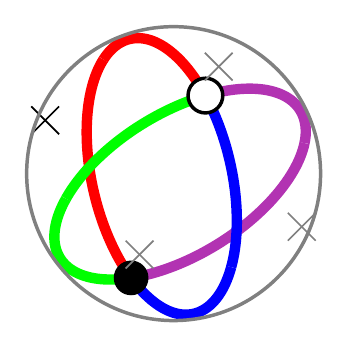}
\end{center}
\caption{On the left, the Riemann surface $X$ for the $N=4$, $k=1$ Adinkra from Figure~\ref{fig:adinkra41}.  There is an order $4$ branched covering map $\beta_X$ that sends it to the corresponding $N=4$ beachball on the right.}
\label{fig:torusbeachballn4}
\end{figure}

Then the preimage of the white vertex at $0$ by $\beta_X$ is the set of $d$ bosons, and likewise the preimage of the black vertex at $\infty$ by $\beta_X$ is the set of $d$ fermions.  The preimage by $\beta_X$ of the edge coloured $j$ is the set of edges of colour $j$.  The wedge between colour $j$ and colour $j+1$ in $\CP^1$ contains one branch point $\xi_j$ ramified to order $2$, and its preimage under $\beta_X$ is the set of squares between colour $j$ and colour $j+1$ edges.  Note that the map $\beta_X$ is not ramified on the Adinkra.

As an aside, we might mention that in Reference \cite{geom1}, this covering map $\beta_X$ was composed with the map $\beta_N:\CP^1\to\CP^1$ given by
\[\beta_N(z)=\frac{z^N}{z^N+1}.\]
The result, $\beta_N\circ\beta_X:X\to\CP^1$, is a cover of order $Nd$, branched over $\{0,1,\infty\}$, and is hence a {\em Bely\u{\i} map}, and connects the subject of Adinkras with the {\em dessins d'enfants} of Grothendieck.  But this composed map will not play a r\^ole in this paper, and we will focus exclusively on $\beta_X$.

\subsection{The Monodromy group}
Let $Y=\CP^1-\{\xi_1,\ldots,\xi_N\}$, the set of regular points of $\CP^1$ with respect to the branched cover $\beta_N$.  Pick the basepoint $y_0=\infty$ in $Y$ (the basepoint $y_0=0$ leads to a similar story).  Above $y_0=\infty$ lie all $d$ of the fermions in $X$, $F=\beta_X^{-1}(\{\infty\})$.

The fundamental group $\pi_1(Y,\infty)$ acts on the set $X_0$ as follows: for each loop $c\in\pi_1(Y,\infty)$
and each element $x_0\in F$, there is a unique lift of $c$ to a path in $X$ starting at $x_0$ and ending at some other fermion $\sigma_c(x_0)\in F$.
The point $\sigma_c(x_0)$ does not depend on the homotopy class of the loop $c$ and, in this way, we obtain a map $\sigma\colon\pi_1(Y,y_0)\to S_d$, where $S_d$ means the symmetric group on $d$ elements.

\begin{definition}
	Given a connected Adinkra $A$, the \emph{monodromy group of $A$} is the image of $\pi_1(Y,\infty)$ in $S_d$.
\end{definition}

\begin{figure}
\begin{center}
\includegraphics[width=2.8in]{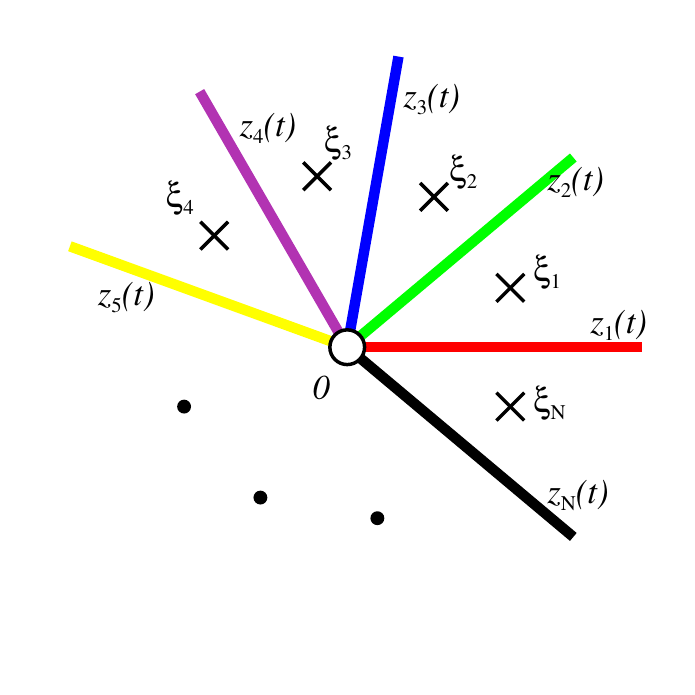}
\includegraphics[width=2.8in]{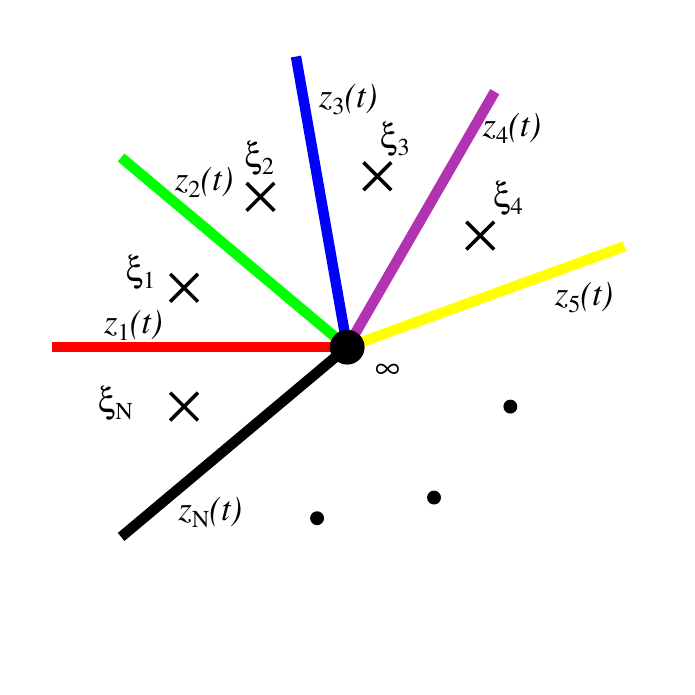}
\end{center}
\caption{Views of the beachball $\CP^1$ near $0$ and $\infty$.  The path $z_1(t)$ goes along the positive real axis.  Note that the ordering of colours goes counterclockwise around $0$ and clockwise around $\infty$.}
\label{fig:spokes-zero-infty}
\end{figure}

The fundamental group $\pi_1(Y,y_0)$ is generated by loops $c_j$ that start at $\infty$ and wrap around $\xi_j$ positively; the only relation between these loops is that\footnote{We will write path composition from right to left.}
\[c_1\cdots c_N=1,\]
so it suffices to use $c_1,\ldots,c_{N-1}$ as generators.  See Figure~\ref{fig:loops-c}.

\begin{figure}
\begin{center}
\includegraphics[width=2.5in]{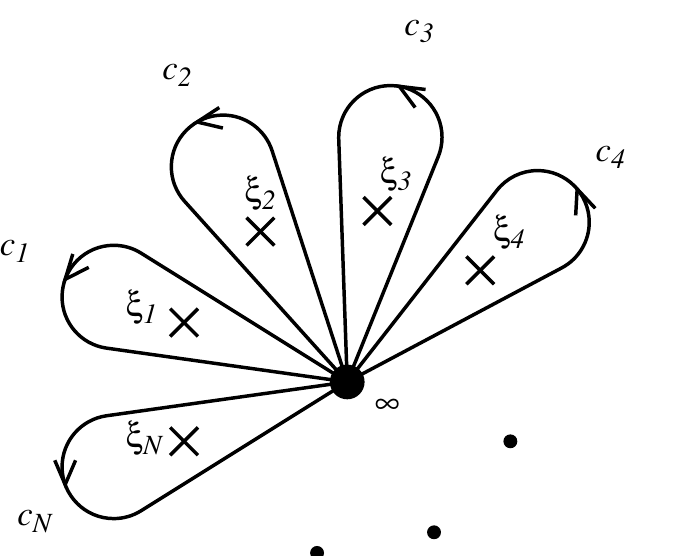}
\end{center}
\caption{The generators $c_1,\ldots,c_N$ of $\pi_1(Y)$ are loops around each of the ramification points.  Note that the product $c_1\cdots c_N=1$, so it suffices to use $c_1,\ldots,c_{N-1}$ as generators.}
\label{fig:loops-c}
\end{figure}

It will be convenient to use a different set of loops, $w_1,\ldots, w_{N-1}$, defined so that $w_j$ is the simple closed curve starting and ending at $\infty$, that goes around $\{\xi_1,\ldots,\xi_j\}$.  See Figure~\ref{fig:loops-w}.  More symbolically,
\begin{align*}
w_1&=c_1\\
w_2&=c_1 c_2\\
\vdots&\makebox[2em]{}\vdots\\
w_{N-1}&=c_1c_2\cdots c_{N-1}
\end{align*}

Likewise, we can write
\[c_i=\begin{cases}
w_1&i=1\\
w_{i-1}{}^{-1}w_i&i=2,\ldots,N-1\\
w_{N-1}{}^{-1}&i=N
\end{cases}
\]
and thus, $w_1,\ldots,w_{N-1}$ is also a set of generators for $\pi_1(Y)$.

\begin{figure}
\begin{center}
\includegraphics[width=3.5in]{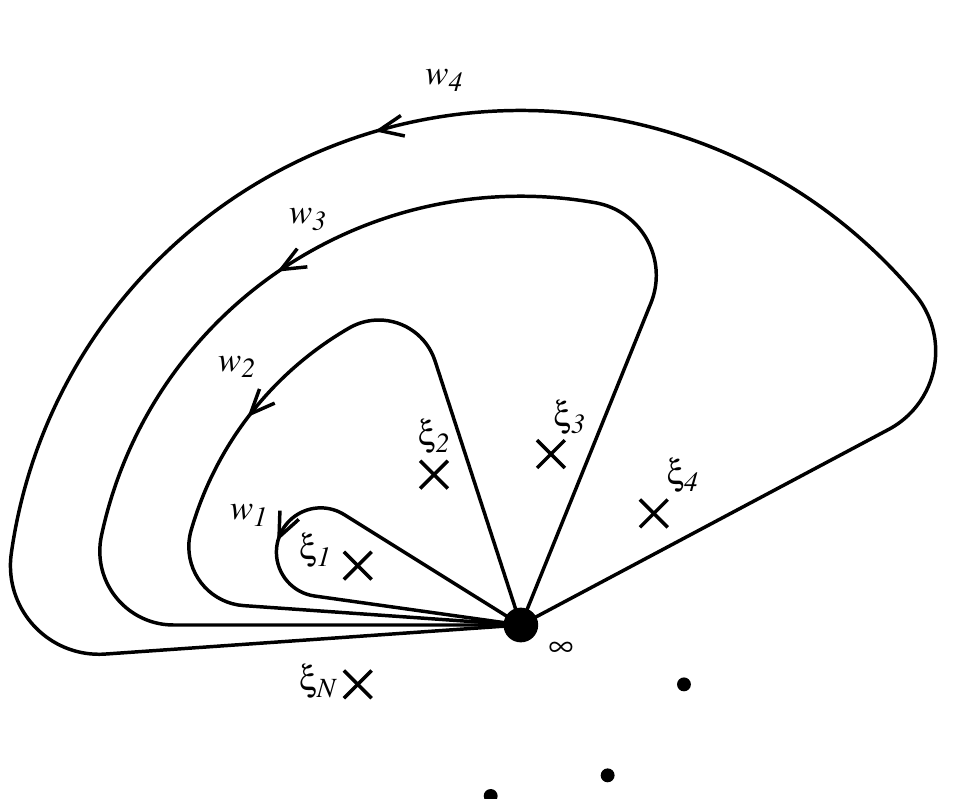}
\end{center}
\caption{The generators $w_1,\ldots,w_{N-1}$ of $\pi_1(Y)$ are an alternative set of generators of the monodromy group.}
\label{fig:loops-w}
\end{figure}

Up to homotopy, the loop $w_j$ can be represented by the concatenation of coloured edges in the beachball $\CP^1$: more specifically, the path $z_{j+1}(t)$ from $\infty$ to $0$, followed by $z_1(t)$ from $0$ to $\infty$.

To find the monodromy $\sigma_j$ associated with $w_j$, pick any fermion $x_0\in F = \beta_X^{-1}(\infty)$.  The path $w_j$ has a lift $\tilde{w}_j$ in $X$ that starts at $x_0$.  This lifted path again follows coloured edges but these are now in the Adinkra.  Again, $\tilde{w}_j$ follows an edge of colour $j+1$, then an edge of colour $1$.  Since $w_j$ started and ended at $\infty$, it must be that $\tilde{w}_j$ ends at a fermion.  Then define $\sigma_j(x_0)$ to be the fermion at the endpoint of $\tilde{w}_j$.  In this way, $\sigma_j$ is a map from fermions to fermions, and is thus a permutation on the set of fermions.  The $\sigma_j$ generate the monodromy group, denoted $\mathcal{M}$.

This was analyzed in Ref.~\cite{geom1}, and for a connected Adinkra,
\[\mathcal{M}\cong \F_2^{N-k-1}.\]
More precisely, a connected Adinkra is associated to a doubly even code $C$ of length $N$ of dimension $k$ \cite{at}.
 This is a $k$-dimensional vector subspace of $\F_2^N$.  
 Define \[E=\{(x_1,\ldots,x_N)\,|\,\sum x_i\equiv 0 \pmod{2}\},\]
which is also a vector subspace of $\F_2^N$ of dimension $N-1$, and note that $C\subset E\subset \F_2^N$.

For an $N$-dimensional cubical Adinkra, with vertex set $\{0,1\}^N$, moving along colour $j$ means translating modulo $2$ by the standard basis vector with a single $1$ in the $j$th coordinate.  Then $\sigma_{w_j}$ is translation modulo $2$ by
\[(1,0,\ldots,0,1,0,\ldots,0)\]
with a $1$ in the first and $j+1$st coordinate.  In $\F_2^N$, these generate $E$.  Then $\mathcal{M}\cong E\cong \F_2^{N-1}$.

For a more general connected Adinkra, which is a quotient of the cubical Adinkra by a code $C$, the monodromy group is obtained by quotienting $E$ by the code $C$, so that $\mathcal{M}\cong E/C\cong \F_2^{N-k-1}$.  
We can write the following short exact sequence:
\begin{equation}
\xymatrix{
1\ar[r]&C\ar[r]&E\ar[r]&\mathcal{M}\ar[r]&1.
}
\label{eqn:monodromy}
\end{equation}
A summary of short exact sequences with examples pertinent to this paper is given in Appendix~\ref{a:SES}.

\section{Signed permutations}\label{sec:signed_perms}
If $S$ is a set, then a permutation on $S$ is a bijection from $S$ to itself.  In this section, we will define a signed permutation using a similar idea, where the objects of $S$ have signs.  To do this, we must first define the notion of signed set.


A {\em signed set} is a set $S$, together with a free $\Z/2\Z$ action on the set.  Since there will be several distinct r\^{o}les of the group $\Z/2\Z$ in this paper, we will write this group as $\{\pm 1\}=\{1,-1\}$ when in this context.  If $x\in S$, we write $-x$ for $(-1)x$.  The orbits of $S$ partition $S$ into subsets, each of which have precisely two elements; let $|S|$ denote the set of such orbits.  

If $S$ and $T$ are signed sets, then a {\em signed set morphism} from $S$ to $T$ is a function $f:S\to T$ so that $f(-x)=-f(x)$ for all $x\in S$.  This corresponds to the standard notion of a morphism of sets on which a group acts.  If this map is bijective, we say it is a signed set isomorphism.  A signed set morphism $f:S\to T$ gives rise to a set map $\abs(f):|S|\to |T|$ that sends $\{x,-x\}$ to $\{f(x),f(-x)\}$.  This is functorial in the sense that if $f:S\to T$ and $g:T\to U$ are signed set morphisms, then $\abs(g\circ f) = \abs(g) \circ \abs(f):|S|\to |U|$.

A {\em signed permutation} on $S$ is a signed set isomorphism from $S$ to itself.  The set of signed permutations on $S$ is a group under composition, and is called the signed permutation group, $BC(S)$.  If $S=\{\pm 1,\ldots,\pm n\}$, then we write $BC_n=BC(S)$.\footnote{There is no universally recognized standard notation for this group in the literature.  We follow the notation that arises from the classification of Coxeter groups, where the groups called $B_n$ and those called $C_n$ coincide, and are thus sometimes called $BC_n$.  It is also sometimes called the hyperoctahedral group, because it is the group of symmetries for the hyperoctahedron, which is the convex hull of the vectors $\{\pm \vec{e}_1, \ldots, \pm \vec{e}_n\}\in\R^n$.}  The map
\[\abs:BC_n\to S_n\]
that takes a signed permutation $f$ to the permutation $|f|$ on the $n$-element set $|S|$ is a homomorphism.

\subsection{Signed permutation matrices}
  Recall that an $n\times n$ permutation matrix is an $n\times n$ matrix where every row and every column has exactly one nonzero entry, and where the nonzero entries must be $1$. 
  Such a matrix corresponds to the linear automorphism of $\R^n$ induced by a permutation of the $n$ standard basis vectors $\{\vec{e}_1,\ldots,\vec{e}_n\}$.
Conversely, given a set $S$, and an ordering on $S=\{x_1,\ldots,x_n\}$, a permutation $\lambda$ on $S$ gives rise to an $n\times n$ matrix $L$, whose $i$th row and $j$th column is
\[L_{i,j}=\begin{cases}
1,&\mbox{if $\lambda(x_j)=x_i$,}\\
0,&\mbox{otherwise}
\end{cases}\]

Likewise, an $n\times n$ signed permutation matrix is an $n\times n$ matrix where every row and every column has exactly one nonzero entry, and where the nonzero entries must be either $1$ or $-1$.  If we take the corresponding linear automorphism of $\R^n$, and restrict to the set $\{\vec{e}_1,\ldots,\vec{e}_n,-\vec{e}_1,\ldots,-\vec{e}_n\}$, viewed as a signed set in the obvious way, we obtain a signed permutation.

And conversely, given a signed set $S$, suppose we have an ordered subset $T=\{x_1,\ldots,x_n\}$ of $S$ so that for each $x\in S$, exactly one of $x$ or $-x$ is in $T$.  Then a signed permutation $\lambda$ on $S$ gives rise to the $n\times n$ matrix $L$ whose $i$th row and $j$th column is
\[L_{i,j}=\begin{cases}
1,&\mbox{if $\lambda(x_j)=x_i$,}\\
-1,&\mbox{if $\lambda(x_j)=-x_i$,}\\
0,&\mbox{otherwise.}
\end{cases}
\]

If $A$ is a signed permutation matrix, we let $|A|$  be the matrix obtained by taking the absolute value of each entry.  Then $|A|$ is a permutation matrix and this notation is compatible with the definition of $\abs(f)$ for signed permutations $f$.  Let $D_A$ be the diagonal signed permutation matrix whose $i$-th diagonal entry is $1$ or $-1$ according to the sign of the nonzero entry in the $i$-th row of $A$.

If $P$ is a permutation matrix and $D$ is a diagonal signed permutation matrix, then $PDP^{-1}$ is a diagonal signed permutation matrix.  Viewing the set of diagonal signed permutation matrices as $\{\pm 1\}^n$, then this forms a normal subgroup of $BC_n$, and its quotient is the symmetric group $S_n$.  
Thus, we have the following short exact sequence:
\begin{equation}\label{e:SESBCn}
\xymatrix{
1\ar[r]&\{\pm 1\}^n\ar[r]&BC_n\ar[r]^{\abs}&S_n\ar[r]&1
}
\end{equation}
Since any permutation matrix can be viewed as a signed permutation matrix, the map $\textrm{abs}$ admits a section and the exact sequence is split. 
Thus, $BC_n$ is a semidirect product:
\[BC_n = \{\pm 1\}^n \rtimes S_n.\]
Thus, each signed permutation matrix $A$ admits a unique factorization:
\begin{equation}
A=B\,P
\end{equation}
where $B$ is a diagonal signed permutation matrix, and $P$ is a permutation matrix.  The unique solution is $B=D_A$ and $P=|A|$.
This is a manifestation of the fact that a signed permutation can be viewed as the composition of a permutation on the pairs of the form $\{x,-x\}$, followed by a specification of whether an element $x$ goes to $y$ or $-y$.

\section{The Signed Monodromy group}\label{sec:SMG}
Given an Adinkra and a corresponding Riemann surface $X$ with $\beta_N:X\to \CP^1$, as defined in Section~\ref{sec:belyi}, the dashing of the edges of the Adinkra describes a signed monodromy group.  
Specifically, take $\infty$ as our basepoint and let $F=\beta_N^{-1}(\infty)$ be the set of fermions; let $-F$ be another disjoint copy of $F$.  
Let $\pm F$ be the union $F\cup -F$, defined as the signed set which pairs elements of $F$ with the corresponding element of $-F$.

We again take the loops $w_j$ in $Y=\CP^1-\{\xi_1,\ldots,\xi_N\}$ based at $\infty$, as described above, and again apply a homotopy to bring these loops to the composition of coloured paths $z_{j+1}$ and $z_1$.  For each $w_j$, we define a signed permutation $\zeta_j$ as follows: if $x_0\in F$, then the path $w_j$ has a lift $\tilde{w}_j$ that starts at $x_0$.  This lifted path again follows coloured edges but these are now in the Adinkra.  Again, $\tilde{w}_j$ follows an edge of colour $j+1$, then an edge of colour $1$, and ends at $\sigma_j(x_0)$.

Now define $\zeta_j(x_0)$ to be $(-1)^s\sigma_j(x_0)$ where $s$ is the number of dashed edges in $\tilde{w}_j$, and $\sigma_j(x_0)$ as before is the endpoint of $\tilde{w}_j$.  Likewise,
$\zeta_j(-x_0) = (-1)^{s+1}\sigma_j(x_0)$.  
In this way, $\zeta_j$ is a signed permutation of the signed set $\pm F$.  
\begin{definition}
	Notations as above, the group generated by the $\zeta_1,\ldots,\zeta_N$ is called the {\em signed monodromy group}, which we denote by $\mathcal{H}$.
\end{definition}

Taking the absolute value of these signed monodromies gives  the old monodromy group $\mathcal{M}$ from Section~\ref{sec:belyi}:

\begin{proposition}
There is an epimorphism
\[\abs:\mathcal{H}\to\mathcal{M}.\]
\end{proposition}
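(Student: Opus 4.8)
The plan is to exhibit $\abs$ explicitly as the restriction of the signed-permutation-to-permutation homomorphism $\abs\colon BC(\pm F)\to S(|\pm F|)$ that was constructed in Section~\ref{sec:signed_perms}, and then to verify that this restriction surjects onto $\mathcal{M}$. The key observation is that $|\pm F|$, the set of orbits of the free $\{\pm 1\}$-action on $\pm F$, is canonically identified with $F$ itself, since each orbit is a pair $\{x_0,-x_0\}$ with $x_0\in F$. Under this identification, the general homomorphism $\abs$ restricts to a map $\mathcal{H}\to S_d$, and the first step is simply to record that $\mathcal{H}$ is by definition a subgroup of $BC(\pm F)$, so $\abs$ is already defined on it and is a homomorphism.

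Next I would compute $\abs$ on the generators $\zeta_1,\ldots,\zeta_N$ of $\mathcal{H}$. Directly from the definition, $\zeta_j(x_0)=(-1)^s\sigma_j(x_0)$, so the orbit $\{x_0,-x_0\}$ is sent to the orbit $\{\sigma_j(x_0),-\sigma_j(x_0)\}$ regardless of the parity $s$. Thus $\abs(\zeta_j)=\sigma_j$ under the identification $|\pm F|\cong F$. This is the crux: the sign information is exactly what $\abs$ discards, so taking absolute values collapses each $\zeta_j$ to the corresponding unsigned monodromy generator $\sigma_j$.

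From here surjectivity is immediate. Since the $\sigma_1,\ldots,\sigma_N$ generate $\mathcal{M}$ by definition, and since $\abs$ is a homomorphism sending the generating set $\{\zeta_j\}$ of $\mathcal{H}$ onto the generating set $\{\sigma_j\}$ of $\mathcal{M}$, the image $\abs(\mathcal{H})$ contains a generating set of $\mathcal{M}$ and hence equals $\mathcal{M}$. Therefore $\abs\colon\mathcal{H}\to\mathcal{M}$ is a well-defined group epimorphism.

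I do not expect any serious obstacle here; the statement is essentially a bookkeeping consequence of the functoriality of $\abs$ established in Section~\ref{sec:signed_perms}. The only point requiring a moment of care is the canonical identification $|\pm F|\cong F$ and the compatibility of the two meanings of $\abs$ (the one on signed permutations as maps, and the one on the matrix level); both were set up earlier precisely so that this identification is natural, so invoking that compatibility is all that is needed. The mild subtlety worth stating explicitly is that $\abs(\zeta_j)$ is well-defined independent of the choice of representative $x_0$ versus $-x_0$ in each orbit, which follows from the relation $\zeta_j(-x_0)=(-1)^{s+1}\sigma_j(x_0)$ giving the same orbit image.
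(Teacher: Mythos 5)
Your proof is correct and takes essentially the same approach as the paper: both identify $F$ with $|\pm F|$ and realize the map as the restriction of the $\abs$ homomorphism from Section~\ref{sec:signed_perms}. Your surjectivity step (checking $\abs(\zeta_j)=\sigma_j$ on generators) is slightly more explicit than the paper's phrasing via arbitrary loops, but it is the same argument in substance.
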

\begin{proof}
We identify $F$ with $|\pm F|$, and let $\textrm{abs}$ denote the homomorphism defined in Section~\ref{sec:signed_perms}.
To show $\abs$ is onto, suppose $m\in\mathcal{M}$.
Then there is a loop in $Y$ that gives rise to $m$.
This loop defines a signed monodromy $h$ so that $\abs(h)=m$.
\end{proof}

If we define $\Sigma$ to be the kernel of the above map, then we have the short exact sequence:
\begin{equation}
\xymatrix{
1\ar[r]&\Sigma\ar[r]&\mathcal{H}\ar[r]^\abs&\mathcal{M}\ar[r]&1
}
\label{eqn:HM}
\end{equation}

\section{Salingaros Vee groups}\label{sec:vee}
In this section, we review the Vee groups due to Salingaros\cite{salingaros1}.

\begin{definition}
The Salingaros Vee Group, denoted by $\G_n$, is the group with the following presentation:
\[\G_n=\gen{-1,g_1,\ldots,g_n\,|\,
(-1)^2=1, g_i{}^2=-1, g_ig_j=-1g_jg_i, (-1)g_i = g_i(-1)}.\]
\end{definition}
It has a central element $-1$ of order two, and $n$ other generators $g_1,\ldots,g_n$ which square to $-1$ and anticommute with each other.
These groups relate to the Clifford algebras $\Cl_n$, or more generally $\Cl_{p,q}$, in the same way that the quaternionic group $Q_8$ relates to the quaternion algebra: each is a finite multiplicative subgroup that contains the defining generators. 
For more basic information about Clifford algebras, see Ref.~\cite{rLM}.

When $n$ is even, $\G_n$ is an example of an extra special $2$-group.

This group can be viewed as a signed set, with the natural meaning of $-1$.
 Since the element $-1$ is central, the action of $\G_n$ on itself by left multiplication is a signed permutation.
Given a concatenated string of generators in $\G_n$, it is straightforward to use the anticommutation relations to arrange the $g_i$ in ascending order, and then use the fact that $g_i{}^2=-1$ to insist that each $g_i$ occurs at most once.  Then, the fact that $-1$ is central and squares to $1$ can be used to demonstrate the following:
\begin{proposition}
Every element of $\G_n$ can be written uniquely as
\[(-1)^bg_1^{x_1}\cdots g_n^{x_n}\]
for $b, x_1,\ldots, x_n \in \{0,1\}$.  Therefore $\G_n$ has $2^{n+1}$ elements.
\end{proposition}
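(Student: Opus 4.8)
The plan is to establish the proposition in two movements: first, that every element admits \emph{a} representation of the claimed form (existence), and second, that this representation is \emph{unique}. The existence half is essentially the normal-form argument already sketched in the paragraph preceding the statement, so I would make it precise rather than leave it informal. Starting from an arbitrary word in the generators $-1, g_1, \ldots, g_n$ and their inverses, I would first note that $g_i^{-1} = -g_i$ (since $g_i{}^2 = -1$) and $(-1)^{-1} = -1$, so every inverse can be rewritten as a positive power times a central factor; hence every element is a product of the $g_i$ and copies of $-1$. Using the anticommutation relation $g_ig_j = (-1)g_jg_i$ repeatedly, I would sort the $g_i$ into ascending index order, each transposition contributing a factor of $-1$ that I push to the front using centrality of $-1$. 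Then $g_i{}^2 = -1$ lets me reduce any repeated generator, again spilling out a central $-1$. Collecting all the accumulated signs into a single leading $(-1)^b$ with $b \in \{0,1\}$ (using $(-1)^2 = 1$) yields the normal form $(-1)^b g_1^{x_1} \cdots g_n^{x_n}$ with each $x_i \in \{0,1\}$.

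The uniqueness half is the genuine obstacle. The relations give an upper bound of $2^{n+1}$ on the order of $\G_n$, but a priori some of these normal forms could coincide, collapsing the group to fewer elements; nothing in the manipulation above rules this out. The standard way to force uniqueness is to exhibit a concrete group (or a faithful representation) of order exactly $2^{n+1}$ in which generators satisfying the defining relations live and in which the $2^{n+1}$ normal forms are visibly distinct. The cleanest choice is to map $\G_n$ into a Clifford algebra: send $g_i$ to the standard generators $\gamma_i$ of $\Cl_n$ (or a matrix realization thereof) and $-1$ to $-\id$. One checks these images satisfy $(-1)^2 = 1$, $\gamma_i{}^2 = -\id$, $\gamma_i\gamma_j = -\gamma_j\gamma_i$ for $i \neq j$, and that $-\id$ is central, so by the universal property of the presentation this extends to a homomorphism $\rho:\G_n \to \Cl_n^\times$. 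Because the monomials $\gamma_1^{x_1}\cdots\gamma_n^{x_n}$ for $(x_1,\ldots,x_n)\in\{0,1\}^n$ are linearly independent in $\Cl_n$ (they form part of the standard vector-space basis), the $2^{n+1}$ images $\pm\gamma_1^{x_1}\cdots\gamma_n^{x_n}$ are pairwise distinct. Hence $\rho$ separates the normal forms, so no two distinct normal forms are equal in $\G_n$, giving uniqueness.

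Combining the two halves: existence shows $|\G_n| \le 2^{n+1}$, and uniqueness (via the faithful-on-normal-forms map $\rho$) shows the $2^{n+1}$ normal forms are genuinely distinct, so $|\G_n| = 2^{n+1}$ and every element has a unique expression of the stated form. I would present the argument in exactly this order, since the normal-form reduction is what tells us \emph{which} $2^{n+1}$ elements to check for distinctness, and the Clifford-algebra realization is what certifies that they really are distinct.

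The step I expect to be the main obstacle is the uniqueness argument, and specifically justifying that the candidate representation is well-defined and that the relevant monomials are linearly independent. The reduction to normal form is a routine rewriting that only uses the relations in one direction, so it cannot by itself preclude unexpected coincidences; the real content is producing an external witness. An alternative to invoking $\Cl_n$ would be to count directly by realizing $\G_n$ as a central extension of $\F_2^n$ by $\{\pm 1\}$ and checking the extension is nontrivial, but the Clifford-algebra route is more self-contained given the paper's stated relationship between $\G_n$ and $\Cl_n$, so I would favor it.
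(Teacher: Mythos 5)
Your proposal is correct, and for the existence half it follows the paper exactly: the paper's proof is precisely the normal-form reduction you describe (sort with anticommutation, collapse squares, push signs to the front). Where you diverge is uniqueness. The paper handles this in one sentence --- ``comparing two such strings and cancelling out common factors of $g_i$, until we can write one of the generators in terms of the others'' --- which, as you correctly diagnose, is not by itself conclusive: rewriting with the relations only bounds $|\G_n|$ above by $2^{n+1}$, and one still needs a reason the presentation does not collapse further. The paper's remedy is its stated alternative, namely constructing the group directly as the set of $2^{n+1}$ formal strings with multiplication defined by the relations; your remedy is to map $\G_n$ into $\Cl_n^\times$ and invoke linear independence of the monomials $\gamma_1^{x_1}\cdots\gamma_n^{x_n}$. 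These are two different external witnesses for the same fact. Yours is cleaner given that the paper already positions $\G_n$ as the finite multiplicative subgroup of $\Cl_n$ generated by the $\gamma_i$, and it outsources the hard part to a standard cited fact (the monomial basis of $\Cl_n$); the paper's formal-strings construction is more self-contained but silently owes a verification that the defined multiplication is associative. Either route closes the gap; your write-up is the more rigorous of the two as stated.
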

Uniqueness can be shown by comparing two such strings and cancelling out common factors of $g_i$, until we can write one of the generators in terms of the others.

Alternately, this group can be constructed explicitly from formal strings of the form given in the Proposition, and defining the multiplication so that the anticommutation relations hold.


\begin{remark}
In Ref.~\cite{Simon-rep}, there is a similar construction, except that the generators square to $1$ instead of $-1$.  The same techniques work, and the main results are analogous.  More generally Ref.~\cite{ablamowicz} defines the groups $G_{p,q}$, where $p$ generators square to $1$, and $q$ generators square to $-1$.
%
%
%
\end{remark}

\begin{example}
$\G_0=\{\pm 1\}\cong\Z/2\Z$.
\end{example}

\begin{example}
$\G_1\cong \Z/4\Z$.  This is because $g_1$ squares to $-1$, and so $-1$ is not needed as a generator.  So $g_1$ generates the group and is of order 4.
\end{example}

\begin{example}
$\G_2\cong Q_8$.  The group $Q_8$ is the famous ``unit quaternion'' group of order $8$,
\[Q_8=\{\pm 1,\pm i, \pm j, \pm k\}\]
The isomorphism here sends $-1$ to $-1$, and $g_1$ to $i$ and $g_2$ to $j$.  Then $g_1g_2$ goes to $k$.  The relations that define $\G_2$ are consistent with the relations that define $Q_8$.
\end{example}

\begin{example}
$\G_3\cong Q_8\times \Z/2\Z$.  The isomorphism sends $g_1$ to $(i,0)$, $g_2$ to $(j,0)$, and $g_3$ to $(k,1)$.
\end{example}

Other examples result in groups that are less familiar.

\section{Main theorem}\label{sec:main}
Let $h_1=111\cdots 1 \in \F_2^N$ be the word of all ones.  Since the code $C$ is doubly even, $h_1$ cannot be in $C$ unless $N$ is a multiple of $4$.

The main theorem is the following:

\begin{theorem}
Suppose a connected Adinkra is given, and has code $C$.  The groups $\mathcal{H}$ and $\Sigma$ depend on whether $h_1\in C$ or not:
\begin{itemize}
\item If $h_1\not\in C$, then  $\mathcal{H}\cong \G_{N-1}$ and $\Sigma\cong \F_2^{k+1}$.
\item If $h_1\in C$, then  $\mathcal{H}\cong \G_{N-2}$ and $\Sigma\cong\F_2^{k}$
\end{itemize}
\label{thm:main2}
\end{theorem}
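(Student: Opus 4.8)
The plan is to leverage the two short exact sequences already established in the excerpt: the monodromy sequence $1\to C\to E\to\mathcal{M}\to 1$ from \eqref{eqn:monodromy}, and the signed sequence $1\to\Sigma\to\mathcal{H}\to\mathcal{M}\to 1$ from \eqref{eqn:HM}. Since $\mathcal{M}\cong E/C\cong\F_2^{N-k-1}$ is already known, the entire theorem reduces to two tasks: identifying $\mathcal{H}$ as a Salingaros Vee group, and computing the kernel $\Sigma$. I would prove the isomorphism $\mathcal{H}\cong\G_m$ (with $m=N-1$ or $N-2$) by exhibiting generators of $\mathcal{H}$ that satisfy exactly the defining relations of $\G_m$, and then checking orders match so that no extra relations collapse the group.

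First I would compute the signed permutations $\zeta_j$ explicitly on the cubical Adinkra, using the dashing rule from the Hamming-cube definition (an edge of colour $i$ is dashed according to the parity of $\sum_{j<i}x_j$). The loop $w_j$ traverses an edge of colour $j+1$ then an edge of colour $1$, so $\zeta_j$ is translation by the vector with ones in coordinates $1$ and $j{+}1$, decorated with a sign recording the parity of dashed edges along the lift. The key computation is to extract, for each $\zeta_j$ and each starting fermion, this accumulated sign as an explicit function of the coordinates; this makes $\zeta_j$ a concrete signed permutation matrix of the form $B_j\,P_j$ in the factorization $BC_n=\{\pm1\}^n\rtimes S_n$. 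I would then verify directly that $\zeta_i^2=-1$ (each $\zeta_j$ is an order-$4$ signed involution, squaring to the central sign-flip) and that $\zeta_i\zeta_j=-\zeta_j\zeta_i$ for $i\neq j$. These are precisely the Vee-group relations, giving a surjection $\G_{N-1}\twoheadrightarrow\mathcal{H}$ onto the group generated by $\zeta_1,\dots,\zeta_{N-1}$; the analogous surjection from $\G_{N-2}$ arises after accounting for the relation $w_1\cdots w_{N}=1$ and the dependence of $\zeta_N$, which is where the all-ones word $h_1$ enters.

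The role of $h_1$ is the crux of the case split. On the cube the product of all the translation vectors for $\zeta_1,\dots,\zeta_{N-1}$ together with the constraint from the fundamental-group relation produces the all-ones vector $h_1$; whether this vector survives in the quotient by $C$ determines whether the corresponding generator is independent or collapses. Concretely, $h_1\in C$ means one fewer independent generator in the signed picture, dropping $\mathcal{H}$ from $\G_{N-1}$ to $\G_{N-2}$. I would make this precise by tracking how the signs behave under the quotient: the central element $-1\in\mathcal{H}$ is detected by an odd total dashing count around a loop that closes up in the quotient, and $h_1\in C$ controls whether such a loop contributes a nontrivial sign. Once the generator count is settled, matching $|\G_m|=2^{m+1}$ against $|\mathcal{H}|=2\cdot|\mathcal{M}|=2\cdot 2^{N-k-1}$ pins down the isomorphism and forces $\Sigma=\ker(\abs)$ to have order $2^{k+1}$ (resp. $2^k$); since $\Sigma$ is a subgroup of the abelian kernel $\{\pm1\}^d$ it is elementary abelian, hence $\Sigma\cong\F_2^{k+1}$ (resp. $\F_2^{k}$).

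The main obstacle I anticipate is the bookkeeping of the dashing signs under the quotient by the code $C$, and showing that the surjection $\G_m\twoheadrightarrow\mathcal{H}$ is actually an isomorphism rather than a proper quotient. The Vee-group relations are easy to verify, but they only give an upper bound on $\mathcal{H}$; the genuinely delicate step is proving that $\mathcal{H}$ is no smaller than claimed, i.e. that $-1$ does not coincide with $1$ in $\mathcal{H}$ and that no unexpected sign relations appear when we pass to the quotient Adinkra. This amounts to showing the central sign-flip is realized by a genuine signed monodromy, which requires finding a loop whose lift accumulates an odd number of dashed edges while returning to its starting fermion — and verifying this is possible exactly in the stated cases. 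The order count via $|\mathcal{H}|=2|\mathcal{M}|$ is the clean way to close this gap, provided one first establishes that $\abs:\mathcal{H}\to\mathcal{M}$ has kernel of order exactly two more than, or equal to, the naive expectation, which is itself governed by the $h_1\in C$ dichotomy.
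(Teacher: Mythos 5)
Your outline gets the first half right and matches the paper: Lemma~\ref{lem:signedholorelations} (proved via the odd-dashing property, i.e.\ the $\GR(d,N)$ relations for the $\lambda_i,\rho_i$) gives $\zeta_j{}^2=-1$ and $\zeta_i\zeta_j=-\zeta_j\zeta_i$, hence an epimorphism $\phi:\G_{N-1}\twoheadrightarrow\mathcal{H}$. You also correctly identify the crux: bounding the kernel $K=\ker\phi$ from above. But your proposed mechanism for closing that gap does not work. The order count $|\mathcal{H}|=2\,|\mathcal{M}|$ is false whenever $k>0$: in fact $|\mathcal{H}|=|\Sigma|\cdot|\mathcal{M}|$ with $|\Sigma|=2^{k+1}$ or $2^k$, so $|\mathcal{H}|$ is $2^{N}$ or $2^{N-1}$, not $2^{N-k}$. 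Worse, the argument is circular — you need $|\Sigma|$ to get $|\mathcal{H}|$, and you propose to get $|\Sigma|$ from $|\mathcal{H}|$. Your fallback, ``verify that no unexpected sign relations appear in the quotient,'' is exactly the unbounded bookkeeping problem the paper's argument is designed to avoid.

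The missing idea is a structural fact about the Vee groups themselves (Propositions~\ref{prop:conj}--\ref{prop:omegasquared} and Theorem~\ref{thm:normal}): every normal subgroup of $\G_n$ either contains $-1$ or lies in the centre $Z(\G_n)$, and the only candidates not containing $-1$ are $\{1\}$, and (when $n\equiv 3\pmod 4$) $\{1,\omega\}$ or $\{1,-\omega\}$ with $\omega=g_1\cdots g_n$. Since $\phi(-1)=-1$ acts as a genuine sign flip on $\pm F$, $-1\notin K$, so $K$ is one of these three groups with no case-checking of individual relations. Then $\abs(\omega)=h_1$ forces $K=\{1\}$ when $h_1\notin C$, and when $h_1\in C$ a separate connectedness argument (the paper's Proposition~\ref{prop:consistenth1}, using that $\omega$ is central) shows the sign of $\phi(\omega)(f)$ is the same for every fermion $f$, so $K$ is genuinely $\{1,\pm\omega\}$ and $\mathcal{H}\cong\G_{N-1}/K\cong\G_{N-2}$ by Proposition~\ref{prop:quotientomega}. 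Your argument never supplies either of these two inputs. One genuinely nice point in your write-up: observing that $\Sigma=\mathcal{H}\cap\ker\bigl(\abs:BC(\pm F)\to S_d\bigr)\subset\{\pm 1\}^d$ is automatically elementary abelian is cleaner than the paper's route, which derives commutativity and exponent $2$ from the doubly-even, self-orthogonal structure of $C$; but you still need the paper's identification $\phi^{-1}(\Sigma)=\abs^{-1}(C)$ to count $|\Sigma|$.
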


\begin{example}[$k=0$]  In this specific case, the code $C$ is trivial, i.e., $C=\{000\cdots 0\}$.  Then $h_1\not\in C$, $\mathcal{H}\cong \G_{N-1}$, and $\Sigma=\{\pm 1\}$.  The quotient of $\mathcal{H}$ by $\{\pm 1\}$ gives rise to the monodromy group $\mathcal{M}\cong E\cong \F_2^{N-1}$.

For each such unsigned monodromy there are two signed monodromies, which are negatives of each other.  Suppose we follow a loop consisting of a concatenation of various $w_j$s.  Whereas the order of the $w_j$s does not affect the unsigned monodromy, it can affect the sign in the signed monodromy.
\end{example}

\begin{example}[$N=4,\ C=\gen{(1111)}$]
In this case, $h_1=1111\in C$, and according to the theorem, $\mathcal{H}\cong \G_2\cong Q_8$, and $\Sigma=\{1,-1\}\cong \F_2$.  There are $2^2=4$ bosons and 4 fermions.  The unsigned monodromy group $\mathcal{M}\cong \F_2^2$ acts on these fermions freely, transitively, and faithfully.  To each unsigned monodromy, there are two signed monodromies, each the negative of the other.
\end{example}

\begin{example}[$N=5,\ C=\gen{(11110)}$]
In this case, note that $h_1=11111$ is not in the code (it cannot be since it has weight 5).  According to the theorem, $\mathcal{H}\cong \G_4$, $\Sigma\cong \F_2^2$, and $\mathcal{M}\cong \F_2^3$.  There are 8 bosons and 8 fermions.  The 8 unsigned monodromies act on these fermions freely, transitively, and faithfully.  To each unsigned monodromy, there are four signed monodromies.
\end{example}


\section{Signed Monodromies and the $\GR(d,N)$ algebra}
\label{sec:proof1}
Given an Adinkra, let $B=\{b_1,\ldots,b_d\}$ be the set of bosons and let $F=\{f_1,\ldots,f_d\}$ be the set of fermions. 
 Define formal negatives $-B=\{-b_1,\ldots,-b_d\}$ and $-F=\{-f_1,\ldots,-f_d\}$.
 The sets $\pm B = B\cup -B$ and $\pm F=F\cup -F$ are then signed sets.

We define signed set homomorphisms $\lambda_1,\ldots,\lambda_N$ from $\pm F$ to $\pm B$, as follows.  
If $f_j$ is a fermion, then $\lambda_i(f_j)=b_k$ if there is a solid edge of colour $i$ from $f_j$ to $b_k$, and $\lambda_i(f_j)=-b_k$ if there is a dashed edge of colour $i$ from $f_j$ to $b_k$.  Because each vertex has exactly one edge incident with it of each colour, this is well-defined.  
Likewise, define $\rho_1,\ldots,\rho_N$ from $\pm B$ to $\pm F$ where $\rho_i(b_j)=f_k$ if there is a solid edge of colour $i$ from $b_j$ to $f_k$ and $\rho_i(b_j)=-f_k$ if there is a dashed edge of colour $i$ from $b_j$ to $f_k$.  

Then the signed monodromy $\zeta_j$ is then
\[\zeta_j=\rho_1\lambda_{j+1}.\]

Since an edge from the boson $b_j$ to the fermion $f_k$ is also an edge from the fermion $f_k$ to the boson $f_j$, we see that
\begin{equation}
\rho_i=\lambda_i{}^{-1},
\label{eqn:susy1}
\end{equation}
and, in particular, the $\rho_i$ and $\lambda_i$ are signed set isomorphisms.

By the odd dashing property Adinkras, if $i\not=j$, then
\begin{align}
\lambda_i\rho_j&=-\lambda_j\rho_i\label{eqn:susy2}\\
\rho_i\lambda_j&=-\rho_j\lambda_i
\label{eqn:susy3}
\end{align}

As a consequence, we have the following:
\begin{lemma}
For all $j$,
\[\zeta_j{}^2=-1\]
and for all $i\not=j$,
\[\zeta_i\zeta_j=-\zeta_j\zeta_i.\]
\label{lem:signedholorelations}
\end{lemma}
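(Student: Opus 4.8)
The plan is to derive both claims directly from the factorization $\zeta_j = \rho_1 \lambda_{j+1}$ together with the three structural identities already established: the inverse relation \eqref{eqn:susy1}, namely $\rho_i = \lambda_i^{-1}$, and the two anticommutation-type relations \eqref{eqn:susy2} and \eqref{eqn:susy3} coming from the odd-dashing property. Everything reduces to careful bookkeeping of how these $\rho$'s and $\lambda$'s slide past one another, so the proof is essentially a short algebraic manipulation rather than anything requiring new geometric input.

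First I would compute $\zeta_j^2$. Writing $\zeta_j^2 = \rho_1 \lambda_{j+1} \rho_1 \lambda_{j+1}$, I want to move the middle pair $\lambda_{j+1}\rho_1$ into a form I can simplify. Since $j \neq 0$ we have $j+1 \neq 1$, so \eqref{eqn:susy2} applies to give $\lambda_{j+1}\rho_1 = -\lambda_1 \rho_{j+1}$. Substituting yields
\[
\zeta_j^2 = \rho_1(-\lambda_1 \rho_{j+1})\lambda_{j+1} = -(\rho_1 \lambda_1)(\rho_{j+1}\lambda_{j+1}).
\]
Now \eqref{eqn:susy1} gives $\rho_1 = \lambda_1^{-1}$ and $\rho_{j+1} = \lambda_{j+1}^{-1}$, so each parenthesized factor is the identity, leaving $\zeta_j^2 = -1$. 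The one point to watch is that the relations are indexed by the edge colours $1,\ldots,N$ while the $\zeta_j$ are indexed so that $\zeta_j$ uses colours $1$ and $j+1$; I would state clearly at the outset which index ranges are in play and confirm that the colours appearing are always distinct, so that \eqref{eqn:susy2} and \eqref{eqn:susy3} are legitimately applicable.

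Next I would handle the anticommutation $\zeta_i \zeta_j = -\zeta_j \zeta_i$ for $i \neq j$. Expanding, $\zeta_i \zeta_j = \rho_1 \lambda_{i+1}\rho_1 \lambda_{j+1}$, and the natural move is again to simplify the inner $\lambda_{i+1}\rho_1$ via \eqref{eqn:susy2} and then collapse an adjacent inverse pair using \eqref{eqn:susy1}, reducing $\zeta_i\zeta_j$ to a single product of the form (sign)$\cdot \rho_{i+1}\lambda_{j+1}$ or similar. Doing the same reduction to $\zeta_j\zeta_i$ and comparing, the two sides should agree up to the opposite sign coming from the skew-symmetry in \eqref{eqn:susy2}/\eqref{eqn:susy3}.

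The main obstacle I anticipate is purely combinatorial sign-tracking: the two identities \eqref{eqn:susy2} and \eqref{eqn:susy3} each require distinct indices, and during the reduction the colour $1$ recurs as a common factor in every $\zeta$, so one must verify at each application that the two indices being swapped really are unequal (e.g.\ that $i+1 \neq j+1$, which follows from $i \neq j$, and that neither equals $1$). Keeping the signs and the index constraints correctly aligned through the chain of substitutions is where an error is most likely to creep in, so I would lay out the substitutions one relation at a time and verify the index hypotheses explicitly before invoking each identity.
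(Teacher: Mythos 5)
Your proposal is correct and follows essentially the same route as the paper: expand $\zeta_j=\rho_1\lambda_{j+1}$, use the anticommutation identity to rewrite the inner $\lambda\rho$ pair, cancel $\rho_1\lambda_1$ via the inverse relation, and compare $\zeta_i\zeta_j$ with $\zeta_j\zeta_i$ through the skew-symmetry of $\rho_{i+1}\lambda_{j+1}$. Your extra care about the index constraints ($j+1\neq 1$, $i+1\neq j+1$) is a welcome addition but does not change the argument.
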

\begin{proof}
\begin{align*}
\zeta_j{}^2&=\rho_1(\lambda_{j+1}\rho_1)\lambda_{j+1}\\
&=-\rho_1\lambda_1\rho_{j+1}\lambda_{j+1}\\
&=-1
\end{align*}
\begin{align*}
\zeta_i\zeta_j&=\rho_1\lambda_{i+1}\rho_1\lambda_{j+1}\\
&=-\rho_1\lambda_1\rho_{i+1}\lambda_{j+1}\\
&=-\rho_{i+1}\lambda_{j+1}.
\end{align*}
Switching $i$ and $j$ shows that $\zeta_j\zeta_i = -\rho_{j+1}\lambda_{i+1}$, which is $\rho_{i+1}\lambda_{j+1}=-\zeta_i\zeta_j$.
\end{proof}

\begin{corollary}
If $N\ge 1$, then $-1\in\mathcal{H}$, and if $\sigma\in \mathcal{H}$, then $-\sigma\in\mathcal{H}$ also.
\end{corollary}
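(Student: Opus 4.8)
The plan is to deduce both assertions directly from Lemma~\ref{lem:signedholorelations}, together with the observation that the element $-1$ of the signed permutation group on $\pm F$ is central.

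First I would establish that $-1\in\mathcal{H}$. Since $N\ge 1$, the generating set $\{\zeta_1,\ldots,\zeta_N\}$ is nonempty, so in particular $\zeta_1\in\mathcal{H}$; because $\mathcal{H}$ is a group it is closed under composition and hence contains $\zeta_1{}^2$, and Lemma~\ref{lem:signedholorelations} identifies $\zeta_1{}^2=-1$. Therefore $-1\in\mathcal{H}$.

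For the second claim, the key point I would use is that $-1$, viewed as the signed permutation sending each $x\in\pm F$ to $-x$, is central in the ambient signed permutation group $BC(\pm F)$. This follows because any signed permutation $\sigma$ satisfies the defining relation $\sigma(-x)=-\sigma(x)$, so that $(-1)\circ\sigma=\sigma\circ(-1)$ and consequently $-\sigma=(-1)\sigma=\sigma(-1)$. Given $\sigma\in\mathcal{H}$ and having just shown $-1\in\mathcal{H}$, closure of $\mathcal{H}$ under the group operation then yields $-\sigma=(-1)\sigma\in\mathcal{H}$.

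I do not anticipate any genuine obstacle here; the only step meriting a moment's care is confirming that $-1$ is central, which is immediate from the signed-set-morphism condition $\sigma(-x)=-\sigma(x)$. Everything else is a direct appeal to the group structure of $\mathcal{H}$ and the squaring relation already proved in the lemma.
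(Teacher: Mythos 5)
Your proof is correct and is exactly the argument the paper intends: the corollary is stated without proof as an immediate consequence of Lemma~\ref{lem:signedholorelations}, namely $-1=\zeta_1{}^2\in\mathcal{H}$ and then $-\sigma=(-1)\sigma\in\mathcal{H}$ by closure. Your extra remark that $-1$ is central in $BC(\pm F)$ is a harmless (and accurate) bonus, not a deviation.
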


If we number the bosons $b_1,\ldots,b_d$ and number the fermions $f_1,\ldots,f_d$, then these signed isomorphisms can be written as matrices.  If we write $L_i$ for the signed permutation matrix corresponding to $\lambda_i$ and $R_i$ for the signed permutation matrix corresponding to $\rho_i$, then (\ref{eqn:susy1}, (\ref{eqn:susy2}), and (\ref{eqn:susy3}) can be phrased as
\begin{align}
L_iR_j+L_jR_i&=2\delta_{ij}\id\label{eqn:susy3a}\\
R_iL_j+R_jL_i&=2\delta_{ij}\id
\label{eqn:susy3b}
\end{align}
which is the form found in Ref.~\cite{enuf,rGR1,rGR2} and is known as the algebra of general, real $d\times d$ matrices describes $N$ supersymmetries: the $\GR(d,N)$ algebra.

\section{Properties of the Salingaros Vee groups}
In order to prove the main theorem, and in order to best use the results, it will be necessary to first prove some basic facts about Salingaros Vee groups $\G_n$.  The results described here are found in Refs.~\cite{salingaros1, Simon-rep, ablamowicz}, but we restate them here for completeness.

\begin{proposition}
There is a group epimorphism
\[\abs:\G_n \to \F_2^n\]
with kernel $\{1,-1\}$.  In other words, the following is a short exact sequence of groups:
\begin{equation}
\xymatrix{1\ar[r]&\{\pm 1\} \ar[r]&\G_n \ar[r]^{\abs}&\F_2^n \ar[r]&1}
\label{eqn:fclexact}
\end{equation}
\label{prop:fclabs}
\end{proposition}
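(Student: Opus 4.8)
The plan is to exhibit the epimorphism $\abs:\G_n\to\F_2^n$ explicitly using the normal form already established in the preceding proposition. Every element of $\G_n$ can be written uniquely as $(-1)^b g_1^{x_1}\cdots g_n^{x_n}$ with $b,x_1,\ldots,x_n\in\{0,1\}$. So I would define $\abs$ by sending this element to the tuple $(x_1,\ldots,x_n)\in\F_2^n$, simply forgetting the sign exponent $b$. Because the normal form is unique, this is a well-defined function; surjectivity is immediate, since every tuple $(x_1,\ldots,x_n)$ is realized by taking $b=0$, and the kernel is exactly the set of elements with all $x_i=0$, namely $\{1,-1\}$, as required.

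The substantive step is checking that $\abs$ is a homomorphism. The cleanest way is not to multiply normal forms directly, but to observe that the presentation of $\F_2^n$ is obtained from that of $\G_n$ by imposing the additional relations $-1=1$. Concretely, I would define a homomorphism out of the free group on $-1,g_1,\ldots,g_n$ by sending $-1\mapsto 0$ and each $g_i\mapsto e_i$, the $i$th standard basis vector of $\F_2^n$, and then verify that all the defining relations of $\G_n$ are sent to $0$: the relation $(-1)^2=1$ holds since $2\cdot 0=0$; the relations $g_i^2=-1$ become $2e_i=0$, valid in $\F_2^n$; the anticommutation $g_ig_j=(-1)g_jg_i$ becomes $e_i+e_j=0+e_j+e_i$, valid since $\F_2^n$ is abelian; and centrality of $-1$ is automatic. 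By the universal property of the presentation, this descends to a homomorphism $\G_n\to\F_2^n$, and by inspection it agrees with the forgetful map $\abs$ defined above.

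I do not anticipate a genuine obstacle here; the only point requiring care is bookkeeping between the two descriptions of $\abs$ — the concrete ``forget the sign'' map and the abstract one induced by the relations — and confirming they coincide, which follows because both agree on the generators. Having established the homomorphism together with the identification of its image ($\F_2^n$) and kernel ($\{1,-1\}$), the short exact sequence in~\eqref{eqn:fclexact} follows directly by the first isomorphism theorem, since $\G_n/\{1,-1\}\cong\F_2^n$. No delicate estimate or construction is needed; the work is entirely formal and rests on the uniqueness of the normal form already in hand.
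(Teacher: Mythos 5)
Your proof is correct and follows essentially the same route as the paper: the paper's (much terser) proof simply observes that quotienting $\G_n$ by $\{1,-1\}$ turns the defining relations into commutativity and order-two conditions on the generators, which is exactly your universal-property argument. Your additional use of the uniqueness of the normal form to pin down the kernel and surjectivity is a welcome bit of extra care, but not a different method.
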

\begin{proof}
If we quotient by $\{1,-1\}$, then the resulting relations say that the generators commute and are of order $2$. 
Thus,  the quotient is isomorphic to $\F_2^n$.
\end{proof}

\begin{proposition}
If 
\[x=(-1)^bg_1^{x_1}\cdots g_n^{x_n}\]
and $g_j$ is a generator then
\[g_jx=xg_j\]
if $\sum_{i\not=j} x_i$ is even.  Otherwise,
\[g_jx=-xg_j.\]
\label{prop:commute}
\end{proposition}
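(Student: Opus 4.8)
The plan is to compute $g_j x$ directly by moving $g_j$ past each factor of $x$ in turn, tracking the sign picked up at each step. Write $x = (-1)^b g_1^{x_1}\cdots g_n^{x_n}$. Since $-1$ is central, commuting $g_j$ with the factor $(-1)^b$ contributes no sign, so the entire sign calculation reduces to commuting $g_j$ past $g_1^{x_1}\cdots g_n^{x_n}$.

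First I would treat the generators one at a time. When $g_j$ is moved past a factor $g_i^{x_i}$ with $i \neq j$, the anticommutation relation $g_i g_j = -g_j g_i$ shows that each such factor contributes a sign $(-1)^{x_i}$: if $x_i = 0$ the factor is absent and there is no sign, and if $x_i = 1$ a single anticommutation introduces a factor of $-1$. When $g_j$ is moved past the factor $g_j^{x_j}$ (the index $i = j$), $g_j$ commutes with itself, so this factor contributes no sign regardless of $x_j$. Multiplying the contributions together, I get
\[
g_j x = (-1)^{\sum_{i\neq j} x_i}\, x\, g_j,
\]
which is exactly the claim: when $\sum_{i\neq j} x_i$ is even the sign is $+1$ and $g_j x = x g_j$, and when it is odd the sign is $-1$ and $g_j x = -x g_j$.

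There is no serious obstacle here; this is a routine bookkeeping argument once one observes that centrality of $-1$ removes the $(-1)^b$ factor from consideration and that the $i=j$ term contributes trivially. The only point requiring a small amount of care is making the informal phrase ``moving $g_j$ past each factor'' rigorous: I would phrase it as an induction on the number of nontrivial factors appearing to the right of $g_j$, with the base case being $g_j$ commuting trivially past the central $(-1)^b$, and the inductive step invoking either the anticommutation relation (for $i \neq j$) or the trivial self-commutation (for $i = j$). Since by the preceding proposition every element has a unique normal form of the stated shape, the expression $\sum_{i\neq j} x_i$ is well defined, so the dichotomy in the statement is unambiguous.
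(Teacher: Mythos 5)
Your proof is correct and is exactly the computation the paper has in mind: the published proof simply asserts that the result is ``a tedious but straightforward calculation that follows from the anticommutativity of the $g_i$,'' and you have written out that calculation, correctly tracking the sign $(-1)^{x_i}$ from each factor with $i\neq j$, the trivial contribution from $i=j$, and the centrality of $-1$. No issues.
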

\begin{proof}
This is a tedious but straightforward calculation that follows from the anticommutativity of the $g_i$.
\end{proof}

\begin{proposition}
Let $x\in \G_n$.  The conjugacy class of $x$ is either $\{x\}$, if $x$ is in the centre, or $\{x,-x\}$ if it is not.
\label{prop:conj}
\end{proposition}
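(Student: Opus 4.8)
The plan is to reduce everything to conjugation by the generators $g_1,\ldots,g_n$ together with the central element $-1$, show that each such conjugation changes $x$ by at most a sign, and then read off the two possibilities.

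First I would record how a single generator conjugates $x$. Writing $x=(-1)^b g_1^{x_1}\cdots g_n^{x_n}$ in the normal form, Proposition~\ref{prop:commute} gives $g_j x=\pm x g_j$, with the sign determined by the parity of $\sum_{i\neq j}x_i$. Multiplying on the right by $g_j^{-1}$ yields $g_j x g_j^{-1}=\pm x$, so conjugation by any generator sends $x$ to either $x$ or $-x$. Moreover, for the same sign $\epsilon=\pm 1$ we have $g_j(-x)g_j^{-1}=\epsilon(-x)$, so conjugation by $g_j$ maps the two-element set $\{x,-x\}$ into itself.

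Next I would globalize this. Since $-1$ is central, conjugation by $-1$ fixes $x$, and every element of $\G_n$ is, by the normal-form proposition, a product of the $g_j$ and $-1$. As conjugation by a product is the composite of the individual conjugations, and each of these preserves $\{x,-x\}$, conjugation by any element of $\G_n$ preserves $\{x,-x\}$. Hence the conjugacy class of $x$ is contained in $\{x,-x\}$, and is therefore either $\{x\}$ or $\{x,-x\}$.

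Finally I would distinguish the two cases. If $x$ is central, its conjugacy class is $\{x\}$ by definition. If $x$ is not central, then $x$ fails to commute with some element of $\G_n$; because $-1$ is central and the $g_j$ together with $-1$ generate the group, $x$ must fail to commute with some generator $g_j$, and for that $g_j$ the sign above is forced to be $-1$, so that $g_j x g_j^{-1}=-x$. Thus $-x$ lies in the conjugacy class, and it equals $\{x,-x\}$. The one step needing a little care — and the only real content beyond bookkeeping — is this last implication that a non-central element is genuinely anticommuted by some generator; I would justify it simply by observing that being central is equivalent to commuting with each of the generators.
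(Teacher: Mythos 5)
Your proposal is correct and follows essentially the same route as the paper's proof: use Proposition~\ref{prop:commute} to see that conjugation by each generator sends $x$ to $\pm x$, extend to arbitrary conjugates by composing (the paper phrases this as induction), and then note that the class collapses to $\{x\}$ exactly when $x$ is central. Your added care in justifying that a non-central element anticommutes with some generator is a slight elaboration of the paper's final sentence, not a different argument.
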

\begin{proof}
The results from Proposition~\ref{prop:commute} imply that $g_jxg_j^{-1}$ is either $x$ or $-x$.  By induction, a conjugate of $x$ can only be $x$ or $-x$.  Trivially, the conjugacy class of $x$ must contain $x$.  The statement that it contains only $x$ is equivalent to the statement that $x$ is in the centre of $\G_n$.
\end{proof}

\begin{definition}
In $\G_n$, define $\omega=g_1\cdots g_n$.
\end{definition}

\begin{proposition}
The centre of $\G_n$ is
\[Z(\G_n)=\begin{cases}
\{\pm 1\},&\mbox{$n$ is even}\\
\{\pm 1,\pm \omega\},&\mbox{$n$ is odd}
\end{cases}\]
\label{prop:centre}
\end{proposition}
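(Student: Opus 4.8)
The plan is to reduce centrality to a parity condition on the exponents in the normal form, and then read off the two cases. The first observation is that, since $\G_n$ is generated by $-1$ and $g_1,\ldots,g_n$, and $-1$ is central by the defining relations, an element $x$ lies in $Z(\G_n)$ \emph{if and only if} it commutes with each generator $g_j$. So it suffices to test commutativity against the $g_j$ alone, and I never have to expand a general product.

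Next I would invoke the normal-form proposition stated earlier to write any $x\in\G_n$ uniquely as $x=(-1)^b g_1^{x_1}\cdots g_n^{x_n}$ with $b,x_i\in\{0,1\}$, and then apply Proposition~\ref{prop:commute}. That result says $g_jx=xg_j$ exactly when $\sum_{i\neq j}x_i$ is even. Writing $S=\sum_{i=1}^n x_i$, we have $\sum_{i\neq j}x_i=S-x_j$, so the condition ``$x$ commutes with $g_j$'' becomes $x_j\equiv S\pmod 2$. Requiring this simultaneously for every $j$ forces all of the $x_j$ to share the common parity of $S$. Because the normal form is unique, these are genuine constraints on $x$ and not artifacts of a chosen word.

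The third step is to observe that this can happen in only two ways. If some $x_j=0$ then $S$ is even, which then forces every $x_i$ to be even, hence every $x_i=0$; symmetrically, if some $x_j=1$ then $S$ is odd and every $x_i=1$. Thus the only candidates for central elements are $x=(-1)^b$ (all exponents zero) and $x=(-1)^b g_1\cdots g_n=(-1)^b\omega$ (all exponents one). The all-zero family $\{\pm 1\}$ is always central, consistent with $S=0$ being even. The all-one family requires $S=n$ to be odd for self-consistency, i.e.\ it contributes $\{\pm\omega\}$ precisely when $n$ is odd. Combining these gives $Z(\G_n)=\{\pm 1\}$ when $n$ is even and $Z(\G_n)=\{\pm 1,\pm\omega\}$ when $n$ is odd, as claimed.

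There is no real analytic obstacle here; the argument is pure parity bookkeeping driven by Proposition~\ref{prop:commute}. The one point that needs genuine care is the logic of the reduction: I must justify that it suffices to check commutation against the generators (leaning on the centrality of $-1$), and that uniqueness of the normal form makes the parity constraints well defined. As a sanity check one could verify directly, using the anticommutation relations, that $\omega$ commutes with each $g_j$ when $n$ is odd and anticommutes with it when $n$ is even, but the parity computation already subsumes this.
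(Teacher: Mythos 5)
Your proposal is correct and follows essentially the same route as the paper: write $x$ in normal form, apply Proposition~\ref{prop:commute} to force $x_1=\cdots=x_n$, and then note that the all-ones case ($\pm\omega$) is central precisely when $n$ is odd. Your explicit parity bookkeeping via $x_j\equiv S\pmod 2$ and the remark that it suffices to test against the generators are just more detailed versions of steps the paper leaves implicit.
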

\begin{proof}
Let $x$ be in the centre.  Then it commutes with each of the $g_j$.  Write
\[x=(-1)^bg_1^{x_1}\cdots g_n^{x_n}.\]
By Proposition~\ref{prop:commute}, $x_1=\cdots=x_n$.  If these are all $0$, $x=\pm 1$.  If these are all $1$, then $x=\pm \omega.$  Finally, $\pm\omega$ commutes with all $g_j$ if and only if $n$ is odd.
\end{proof}

\begin{proposition}
\[\omega^2=(-\omega)^2=\begin{cases}
1,&\mbox{if $n\equiv 0, 3\pmod{4}$}\\
-1,&\mbox{if $n\equiv 1, 2\pmod{4}$}
\end{cases}
\]
\label{prop:omegasquared}
\end{proposition}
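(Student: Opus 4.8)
The plan is to reduce the whole statement to a single computation of $\omega^2$. The claim $(-\omega)^2 = \omega^2$ is immediate: since $-1$ is central and $(-1)^2 = 1$, we get $(-\omega)^2 = (-1)^2 \omega^2 = \omega^2$. So the real content is evaluating $\omega^2 = (g_1 \cdots g_n)^2$ using only $g_i^2 = -1$ and the anticommutation relations.

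First I would rewrite one of the two copies of $\omega$ in reversed order. Reversing the word $g_1 g_2 \cdots g_n$ into $g_n g_{n-1}\cdots g_1$ amounts to swapping each pair of distinct generators exactly once, and each swap contributes a factor $-1$ via $g_i g_j = -g_j g_i$. Since there are $\binom{n}{2} = n(n-1)/2$ such pairs, this gives $\omega = (-1)^{n(n-1)/2}\, g_n g_{n-1}\cdots g_1$. One can confirm this reversal sign by a short induction on $n$: peel off the leading $g_1$ and move it past the remaining $n-1$ generators, picking up $(-1)^{n-1}$, and apply the inductive hypothesis to the rest.

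Then I would substitute this reversed expression for the right-hand factor, obtaining $\omega^2 = (-1)^{n(n-1)/2}\, g_1 \cdots g_{n-1} g_n \cdot g_n g_{n-1} \cdots g_1$. The inner word now telescopes from the middle outward: the central pair gives $g_n g_n = -1$, then $g_{n-1} g_{n-1} = -1$, and so on, contracting $n$ nested pairs (the central $-1$'s pull out freely by centrality). This produces $(-1)^n$, so that $\omega^2 = (-1)^{n(n-1)/2 + n} = (-1)^{n(n+1)/2}$.

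Finally I would reduce the exponent modulo $2$. Since $n$ and $n+1$ are consecutive, exactly one is even, and $n(n+1)/2$ is even precisely when that even factor is itself divisible by $4$, i.e. when $n \equiv 0$ or $n \equiv 3 \pmod 4$; in the remaining cases it is odd. This yields $\omega^2 = 1$ for $n \equiv 0, 3 \pmod 4$ and $\omega^2 = -1$ for $n \equiv 1, 2 \pmod 4$, matching the stated formula. The only delicate point is the bookkeeping of the reversal sign $(-1)^{n(n-1)/2}$; once that is pinned down, the telescoping and the final parity reduction are routine.
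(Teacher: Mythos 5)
Your proof is correct and follows essentially the same route as the paper: both arguments reduce $\omega^2$ to a sign of the form $(-1)^{\binom{n}{2}}\cdot(-1)^n=(-1)^{n(n+1)/2}$, with $\binom{n}{2}$ counting anticommutation swaps and $(-1)^n$ coming from the $n$ squarings $g_i^2=-1$. The only cosmetic difference is that you reverse one copy of $\omega$ first and telescope from the middle, whereas the paper sorts the concatenated word $g_1\cdots g_n g_1\cdots g_n$ directly; the sign bookkeeping is identical.
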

\begin{proof}
In rewriting $\omega^2=g_1\cdots g_n g_1\cdots g_n$ in order, we do ${n\choose 2}=n(n-1)/2$ swaps.  This is even if and only if $n$ is congruent to $0$ or $1$ modulo $4$.  The result when this is done is
\[g_1{}^2\cdots g_n{}^2=(-1)^n.\]
This is $1$ if $n$ is even, and $-1$ if $n$ is odd.
\end{proof}



\subsection{Normal subgroups of $\G_n$}
We now classify all normal subgroups of $\G_n$.  As we will see, there are two kinds: those of the type $\abs^{-1}(V)$ for some subgroup $V$ of $\F_2^n$, and those that are contained in the centre.

First, the normal subgroups of the first type.
\begin{proposition}
Given a subgroup $V$ of $\F_2^n$, $\abs^{-1}(V)$ is a normal subgroup of $\G_n$ that contains $-1$.
\label{prop:preimage}
\end{proposition}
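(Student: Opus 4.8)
The plan is to reduce the statement entirely to the general machinery of the short exact sequence from Proposition~\ref{prop:fclabs}, namely
\[
\xymatrix{1\ar[r]&\{\pm 1\} \ar[r]&\G_n \ar[r]^{\abs}&\F_2^n \ar[r]&1.}
\]
The key point is that $\abs:\G_n\to\F_2^n$ is a group \emph{homomorphism} (this is exactly the content of Proposition~\ref{prop:fclabs}), so I can lean on the standard fact that preimages of subgroups under homomorphisms are again subgroups. Concretely, given a subgroup $V\le\F_2^n$, I would verify directly that $\abs^{-1}(V)$ is closed under multiplication and inverses: if $x,y\in\abs^{-1}(V)$ then $\abs(xy)=\abs(x)\abs(y)\in V$ and $\abs(x^{-1})=\abs(x)^{-1}\in V$, using that $V$ is a subgroup; this shows $\abs^{-1}(V)\le\G_n$.

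For normality, the essential observation is that the target $\F_2^n$ is \emph{abelian}, so every subgroup $V$ is automatically normal in $\F_2^n$. I would then invoke the standard fact that the preimage of a normal subgroup under a homomorphism is normal: for $g\in\G_n$ and $x\in\abs^{-1}(V)$, one has $\abs(gxg^{-1})=\abs(g)\abs(x)\abs(g)^{-1}\in V$ since $V$ is normal (indeed, since $\F_2^n$ is abelian this equals $\abs(x)\in V$ outright), whence $gxg^{-1}\in\abs^{-1}(V)$. Finally, to see that $-1$ lies in $\abs^{-1}(V)$, I would note that $-1$ is in $\ker\abs=\{1,-1\}$, so $\abs(-1)=0$, the identity of $\F_2^n$; since $0\in V$ for every subgroup $V$, it follows that $-1\in\abs^{-1}(V)$. (More generally the whole kernel is contained in $\abs^{-1}(V)$.)

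There is no genuine obstacle here: the statement is a packaging of elementary group theory, and the only thing one must be sure to have in hand is that $\abs$ really is a homomorphism with the stated kernel, which Proposition~\ref{prop:fclabs} already supplies. If anything, the one point worth stating explicitly rather than leaving implicit is \emph{why} normality is free, namely the abelianness of $\F_2^n$; this is what guarantees that \emph{every} subgroup $V$ (not merely distinguished ones) pulls back to a normal subgroup of $\G_n$, which is precisely the generality the classification of normal subgroups in the following subsection will need.
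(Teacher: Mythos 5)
Your proposal is correct and follows essentially the same route as the paper's proof: both use that $\abs$ is a homomorphism, that every subgroup of the abelian group $\F_2^n$ is normal so its preimage is normal in $\G_n$, and that $-1\in\ker\abs$ forces $-1\in\abs^{-1}(V)$. You simply spell out the standard verifications that the paper leaves implicit.
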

\begin{proof}
Since $\F_2^n$ is abelian, any subgroup of it is automatically normal, and the preimage of a normal subgroup under the group homomorphism $\abs$ is a normal subgroup of $\G_n$.  Since the kernel of $\abs$ is $\{1,-1\}$, we have that $-1$ is in such a preimage.
\end{proof}

The main observation is the following:
\begin{proposition}
Every normal subgroup of $\G_n$ either contains $-1$ or is contained in the centre.
\label{prop:normal}
\end{proposition}
\begin{proof}
Suppose $G$ is a normal subgroup of $\G_n$ that is not contained in the centre.  Let $x\in G$ be not in the centre of $\G_n$.  Since $G$ is normal, all conjugates of $x$ are in $G$.  Then by Proposition~\ref{prop:conj}, $-x\in G$.  Since $x$ and $-x$ are in $G$, we conclude that $-1\in G$.
\end{proof}

Note that of course, it is possible for a normal subgroup to be both contained in the centre and contain $-1$.

These facts are what we need to prove:
\begin{theorem}
Let $G$ be a normal subgroup of $\G_n$.
\begin{itemize}
\item If $-1\in G$, then $G=\abs^{-1}(V)$ for some $V\in \F_2^n$.
\item If $-1\not\in G$, then either $G=\{1\}$, or $n\equiv 3\pmod{4}$ and  either $G=\{1,\omega\}$ or $G=\{1,-\omega\}$.
\end{itemize}
\label{thm:normal}
\end{theorem}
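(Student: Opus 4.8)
The plan is to follow the dichotomy already isolated in Proposition~\ref{prop:normal}: every normal subgroup either contains $-1$ or sits inside the centre. I would treat these two cases separately, exactly matching the two bullet points of the statement. The first case is essentially the correspondence theorem applied to the surjection $\abs$, while the second case is a finite check inside the centre that is governed by the value of $\omega^2$.

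For the case $-1\in G$, I would argue as follows. By Proposition~\ref{prop:fclabs} the kernel of $\abs\colon\G_n\to\F_2^n$ is exactly $\{1,-1\}$, so the hypothesis $-1\in G$ means the whole kernel lies in $G$. The correspondence (lattice isomorphism) theorem then says that any subgroup of $\G_n$ containing the kernel is recovered from its image, i.e. $G=\abs^{-1}(\abs(G))$. Setting $V=\abs(G)$, which is a subgroup of $\F_2^n$ because the image of a subgroup under a homomorphism is a subgroup, gives $G=\abs^{-1}(V)$, as required. (Proposition~\ref{prop:preimage} confirms in the other direction that every such $\abs^{-1}(V)$ is genuinely a normal subgroup, so this case is a clean bijection.)

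For the case $-1\notin G$, Proposition~\ref{prop:normal} forces $G\subseteq Z(\G_n)$, and Proposition~\ref{prop:centre} tells us what that centre is. If $n$ is even, then $Z(\G_n)=\{\pm 1\}$, and excluding $-1$ leaves only $G=\{1\}$. If $n$ is odd, then $Z(\G_n)=\{\pm 1,\pm\omega\}$, so I would enumerate the subgroups of this four-element set that avoid $-1$. The two computations driving this are $\omega\cdot(-\omega)=-\omega^2$, and the value of $\omega^2$ from Proposition~\ref{prop:omegasquared}: it equals $-1$ when $n\equiv 1\pmod 4$ and $1$ when $n\equiv 3\pmod 4$. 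When $n\equiv 1\pmod 4$, any $G$ containing $\omega$ or $-\omega$ would then contain $\omega^2=-1$, a contradiction, so $G=\{1\}$. When $n\equiv 3\pmod 4$ we have $\omega^2=(-\omega)^2=1$, so $\{1,\omega\}$ and $\{1,-\omega\}$ are bona fide order-two subgroups, each central and hence normal; moreover $\omega\cdot(-\omega)=-1$ means no $-1$-avoiding subgroup can contain both $\omega$ and $-\omega$. Hence the only possibilities are $\{1\}$, $\{1,\omega\}$, and $\{1,-\omega\}$.

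The argument is largely bookkeeping once the structural propositions are invoked, and I do not anticipate a genuine obstacle. The one place demanding care is the odd-$n$ subcase: I must track precisely how $\omega^2$ depends on $n\bmod 4$ and verify that a subgroup of the centre does not secretly reacquire $-1$ through products such as $\omega\cdot(-\omega)$. Keeping that finite case split organized cleanly is the only real subtlety.
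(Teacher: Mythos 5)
Your proof is correct and follows essentially the same route as the paper: the first bullet is the paper's closure argument repackaged as the correspondence theorem for the surjection $\abs$, and the second bullet is the same reduction to the centre via Proposition~\ref{prop:normal} followed by the same case split on $\omega^2$ using Propositions~\ref{prop:centre} and~\ref{prop:omegasquared}. Your explicit check that $\omega\cdot(-\omega)=-1$ when $n\equiv 3\pmod 4$ just makes precise the paper's closing remark that one ``simply examines'' the subgroups of the centre avoiding $-1$.
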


\begin{proof}
Let $G$ be a normal subgroup of $\G_n$ that contains $-1$.  Then, by closure, for every $x\in G$, $-x=(-1)x\in G$.  Therefore $G=\abs^{-1}(V)$ for some subset $V$ of $\F_2^n$.  Since $\abs$ is onto, we have that $V=\abs(\abs^{-1}(V))$.  This is $\abs(G)$, which is a subgroup of $\F_2^n$.

Now suppose $G$ is a normal subgroup of $\G_n$ that does not contain $-1$.  By Proposition~\ref{prop:normal}, we have that $G$ is contained in the centre $Z(\G_n)$.  By Proposition~\ref{prop:centre}, this means $G$ is trivial if $n$ is even.  If $n\equiv 1\pmod{4}$, by Proposition~\ref{prop:omegasquared}, $G$ containing $\omega$ implies $G$ contains $-1$.  Likewise $(-\omega)^2=-1$ and $G$ containing $-\omega$ implies $G$ contains $-1$.  Therefore, if $G$ does not contain $-1$, then $G$ must be trivial.  For $n\equiv 3\pmod{4}$, we simply examine the subgroups of the centre $\{1,-1,\omega,-\omega\}$ that do not contain $-1$.
\end{proof}

\begin{proposition}
If $n\equiv 3\pmod{4}$, then
\[\G_n\cong \G_{n-1}\times \Z/2\Z\]
We also have
\[\G_n/\{1,\omega\}\cong \G_{n-1}\cong \G_n/\{1,-\omega\}.\]
\label{prop:quotientomega}
\end{proposition}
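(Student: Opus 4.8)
The plan is to realize $\G_n$ as an internal direct product of a copy of $\G_{n-1}$ with a central subgroup of order two, and to read off both quotient statements from that decomposition. First I would record the two consequences of the hypothesis $n\equiv 3\pmod 4$ that make everything work: by Proposition~\ref{prop:centre} the element $\omega=g_1\cdots g_n$ is central (since $n$ is odd), and by Proposition~\ref{prop:omegasquared} we have $\omega^2=1$. Hence $\{1,\omega\}$ and $\{1,-\omega\}$ are subgroups of order two contained in the centre, and in particular they are normal.

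Next I would identify the $\G_{n-1}$ factor as $H=\gen{g_1,\ldots,g_{n-1}}$. The inclusion of generators gives a surjection $\G_{n-1}\to H$, because the defining relations of $\G_{n-1}$ manifestly hold among $g_1,\ldots,g_{n-1}$ inside $\G_n$. That this surjection is an isomorphism, and that $|H|=2^n$, follows from the uniqueness of the normal form $(-1)^bg_1^{x_1}\cdots g_n^{x_n}$, which shows the $2^n$ words $(-1)^bg_1^{x_1}\cdots g_{n-1}^{x_{n-1}}$ are pairwise distinct. The same normal form shows $\omega\notin H$, since the canonical expression of $\omega$ has $x_n=1$; hence $H\cap\{1,\omega\}=\{1\}$. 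Moreover $H$ is normal: conjugation by $g_n$ sends $g_i\mapsto g_ng_ig_n^{-1}=-g_i\in H$ using anticommutativity, and the remaining generators either lie in $H$ or are central.

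With these pieces in hand the internal direct product criterion applies directly: $H$ and $\{1,\omega\}$ are both normal, they intersect trivially, and $|H|\cdot|\{1,\omega\}|=2^{n+1}=|\G_n|$, so $H\{1,\omega\}=\G_n$. Therefore $\G_n=H\times\{1,\omega\}\cong\G_{n-1}\times\Z/2\Z$, proving the first assertion, and quotienting by the second factor yields $\G_n/\{1,\omega\}\cong H\cong\G_{n-1}$. For the remaining quotient I would exhibit the automorphism of $\G_n$ fixing $-1$ and each $g_i$ with $i\ge 2$ while sending $g_1\mapsto -g_1$; one checks it respects every defining relation, and it carries $\omega$ to $-\omega$ and hence $\{1,\omega\}$ to $\{1,-\omega\}$, giving $\G_n/\{1,-\omega\}\cong\G_n/\{1,\omega\}\cong\G_{n-1}$.

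The verifications of relations and the conjugation formula are mechanical. The one point that genuinely relies on the earlier structural results rather than on bare manipulation is the pair of claims $H\cong\G_{n-1}$ and $\omega\notin H$: both rest on the uniqueness of the normal form, so I expect the main care to go into invoking that uniqueness correctly, rather than into any delicate estimate or case analysis.
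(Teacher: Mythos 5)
Your proof is correct and takes essentially the same approach as the paper: both realize $\G_n$ as the internal direct product of $\gen{g_1,\ldots,g_{n-1}}\cong\G_{n-1}$ with the central order-two subgroup $\{1,\omega\}$, using the normal form to get $\omega\notin\gen{g_1,\ldots,g_{n-1}}$, and read off the quotients. The only cosmetic difference is that you handle $\{1,-\omega\}$ via the sign-flipping automorphism $g_1\mapsto -g_1$, whereas the paper simply reruns the same direct-product argument with $-\omega$ in place of $\omega$.
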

\begin{proof}
First, if $n\equiv 3\pmod{4}$, then $\omega^2=1$ by Proposition~\ref{prop:omegasquared}.  Thus $\{1,\omega\}$ is a normal subgroup of $\G_n$ and is isomorphic to $\Z/2\Z$.

Likewise, the subgroup $J$ generated by $\{g_1,\ldots,g_{n-1}\}$ is isomorphic to $\G_{n-1}$ and is normal, by Proposition~\ref{prop:conj}.

Since $\omega\not\in J$, we have that $\{1,\omega\}\cap J=\{1\}$.  Therefore these two subgroups form $\G_n$ as an internal direct product.  The quotient $\G_n/\{1,\omega\}$ is therefore isomorphic to $\G_{n-1}$.

The same arguments work for $\{1,-\omega\}$.
\end{proof}

\section{Proof of main theorem}
\label{sec:proof2}
In this section we will prove the main theorem, Theorem~\ref{thm:main2}, computing $\mathcal{H}$ and $\Sigma$.

The $\zeta_j$ satisfy the same relations in Lemma~\ref{lem:signedholorelations} as the generators of $\G_{N-1}$.  This proves:
\begin{proposition}
There is a group epimorphism
\[\phi:\G_{N-1}\to\mathcal{H}.\]
with $\phi(g_j)=\zeta_j$ and $\phi(-1)=-1$.
\end{proposition}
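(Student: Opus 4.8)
The plan is to construct the homomorphism $\phi$ using the universal property of the group presentation of $\G_{N-1}$. Recall from the definition in Section~\ref{sec:vee} that $\G_{N-1}$ is presented with generators $-1, g_1,\ldots,g_{N-1}$ subject to the relations $(-1)^2=1$, $g_i{}^2=-1$, $g_ig_j=(-1)g_jg_i$ for $i\neq j$, and $-1$ central. To define a homomorphism out of a group given by a presentation, it suffices to assign images to the generators and verify that every defining relation is preserved. I would therefore set $\phi(-1)=-1$ (the signed permutation $-\idn$ in $\mathcal{H}$) and $\phi(g_j)=\zeta_j$ for $j=1,\ldots,N-1$, and check each relation.

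The key steps are as follows. First, the relation $(-1)^2=1$ holds in $\mathcal{H}$ because $-1$ acts as the nontrivial free $\Z/2\Z$ action on the signed set $\pm F$, which squares to the identity. Second, the relation $\zeta_j{}^2=-1$ is exactly the first identity of Lemma~\ref{lem:signedholorelations}. Third, the anticommutation relation $\zeta_i\zeta_j=-\zeta_j\zeta_i$ for $i\neq j$ is exactly the second identity of Lemma~\ref{lem:signedholorelations}. Fourth, centrality of $-1$ holds because the element $-1$ in $\mathcal{H}$ is the image of the central $\Z/2\Z$ action, which commutes with every signed permutation by definition of a signed set morphism (any $f$ satisfies $f(-x)=-f(x)$, so $f$ commutes with the negation map). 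By the universal property, these verifications produce a well-defined homomorphism $\phi:\G_{N-1}\to\mathcal{H}$.

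It then remains to check surjectivity. This is immediate: by definition $\mathcal{H}$ is the group generated by $\zeta_1,\ldots,\zeta_N$, and the $\zeta_j$ for $j=1,\ldots,N-1$ together with $-1$ already lie in the image of $\phi$. I would note that the generator $\zeta_N$ corresponding to the loop $w_N=c_1\cdots c_N$ is expressible in terms of the others (indeed $c_1\cdots c_N=1$ in $\pi_1(Y)$, so $w_N$ is homotopic to the trivial loop and $\zeta_N$ is $\pm 1$, or more carefully one uses that the $w_1,\ldots,w_{N-1}$ already generate $\pi_1(Y)$). Hence the image of $\phi$ contains a generating set for $\mathcal{H}$, so $\phi$ is onto.

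I do not expect a genuine obstacle here, since the hard analytic and combinatorial content has already been isolated into Lemma~\ref{lem:signedholorelations}, which encodes the odd-dashing condition as the signed relations. The only point requiring mild care is the handling of the $N$th generator $\zeta_N$: one must confirm that restricting to $g_1,\ldots,g_{N-1}$ still yields a generating set for $\mathcal{H}$ rather than a proper subgroup. This follows from the fact, recalled earlier, that $w_1,\ldots,w_{N-1}$ generate the fundamental group $\pi_1(Y)$, so the corresponding signed monodromies $\zeta_1,\ldots,\zeta_{N-1}$, together with $-1$, generate all of $\mathcal{H}$.
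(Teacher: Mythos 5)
Your proof is correct and takes essentially the same route as the paper: define $\phi$ on the generators, invoke the universal property of the presentation with Lemma~\ref{lem:signedholorelations} supplying the relations, and conclude surjectivity from the fact that the $\zeta_j$ generate $\mathcal{H}$. Your extra care about the index range of the generators is a reasonable elaboration but does not change the argument.
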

\begin{proof}
Define $\phi(g_j)=\zeta_j$ and $\phi(-1)=-1$, and extend $\phi$ to products of these generators in order to ensure that $\phi$ is a homomorphism.  Lemma~\ref{lem:signedholorelations} guarantees that this is well-defined.  The $\zeta_j$ generate $\mathcal{H}$, so that this map is onto.
\end{proof}

If we define $K=\ker(\phi)$, then $\mathcal{H}\cong \G_{N-1}/K$.  This results in the following short exact sequence:

\begin{equation}
\xymatrix{
1\ar[r]&K \ar[r]&\G_{N-1} \ar[r]^{\phi}&\mathcal{H} \ar[r]&1\\
}
\label{eqn:kfclh}
\end{equation}

We can use this, and the short exact sequences (\ref{eqn:monodromy}), (\ref{eqn:HM}), and (\ref{eqn:fclexact}), to put together the following diagram:

\begin{equation}
\xymatrix{
&&1\ar[d]&1\ar[d]\\
&&\{\pm 1\}\ar[d]&\Sigma\ar[d]\\
1\ar[r]&K\ar[r]&\G_{N-1}\ar[r]^{\phi}\ar[d]^{\abs}&\mathcal{H}\ar[r]\ar[d]^{\abs}&1\\
1\ar[r]&C \ar[r]&E\cong\F_2^{N-1} \ar[r]^{\rho}\ar[d]&\mathcal{M} \ar[r]\ar[d]&1\\
&&1&1&
}
\label{eqn:diagram1}
\end{equation}

Here we have defined $\Sigma$ to be the kernel of $\abs:\mathcal{H}\to\mathcal{M}$.  The isomorphism $E\cong \F_2^{N-1}$ is obtained by taking $\pi:\F_2^N\to \F_2^{N-1}$, the projection onto the last $N-1$ coordinates, and restricting to $E$.  The result is a linear map $\pi|_E:E\to\F_2^{N-1}$.  Since the kernel of this is trivial, and the dimensions of $E$ and $\F_2^{N-1}$ are both $N-1$, $\pi|_E$ is an isomorphism.  The inverse takes $(y_1,\ldots,y_{N-1})\in \F_2^{N-1}$ to $(y_0,y_1,\ldots,y_{N-1})\in E$, where $y_0=\sum_{i=1}^{N-1} y_i\pmod{2}$.

\begin{proposition}
The diagram (\ref{eqn:diagram1}) above is commutative.
\end{proposition}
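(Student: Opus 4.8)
The diagram (\ref{eqn:diagram1}) commutes.

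The plan is to notice that almost everything in the diagram is already in hand: the two rows are the short exact sequences (\ref{eqn:kfclh}) and (\ref{eqn:monodromy}), while the two inner columns are (\ref{eqn:fclexact}) and (\ref{eqn:HM}), all established previously. Since $K$ and $C$ are not joined by a vertical arrow (nor are the outer $1$'s), the only square whose commutativity is not automatic is the central one,
\[
\xymatrix{
\G_{N-1}\ar[r]^{\phi}\ar[d]^{\abs}&\mathcal{H}\ar[d]^{\abs}\\
E\ar[r]^{\rho}&\mathcal{M}.
}
\]
The whole task therefore reduces to verifying $\abs\circ\phi=\rho\circ\abs$ as homomorphisms $\G_{N-1}\to\mathcal{M}$.

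Because every arrow in sight is a group homomorphism and $\G_{N-1}$ is generated by $-1$ and $g_1,\ldots,g_{N-1}$, it suffices to check the identity on these generators. On $-1$: the down-then-right route sends $-1\mapsto 0\in\F_2^{N-1}\cong E$ (since $-1\in\ker\abs=\{\pm1\}$) and then $\rho(0)=1\in\mathcal{M}$; the right-then-down route sends $-1\mapsto\phi(-1)=-1\in\mathcal{H}$, whose underlying unsigned permutation is the identity, so $\abs(-1)=1\in\mathcal{M}$ as well. The two routes agree.

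The substantive case is a generator $g_j$. The right-then-down route gives $\abs(\phi(g_j))=\abs(\zeta_j)=\sigma_j$, the unsigned monodromy generator, since $\zeta_j=\rho_1\lambda_{j+1}$ has $|\zeta_j|=\sigma_j$ by construction. The down-then-right route gives $\rho(\abs(g_j))=\rho(e_j)$, and to match these I will track the identifications explicitly. Under the chosen isomorphism $\pi|_E\colon E\to\F_2^{N-1}$ (projection onto the last $N-1$ coordinates), the basis vector $e_j$ corresponds to the element $v_j\in E$ with a $1$ in the first and $(j+1)$st coordinates, because the inverse reconstructs the suppressed coordinate as $y_0=\sum_i y_i=1$. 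By the computation recalled in Section~\ref{sec:belyi}, translation by exactly this $v_j$ is the unsigned monodromy $\sigma_j$, and $\rho$ is the quotient $E\to E/C\cong\mathcal{M}$; hence $\rho(e_j)=\sigma_j$ too. Both routes yield $\sigma_j$, the central square commutes, and with it the whole diagram.

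The main obstacle is entirely bookkeeping: one must line up three separate labelings — the generator index $j$ of $\G_{N-1}$, the colour $j+1$ appearing in $\zeta_j=\rho_1\lambda_{j+1}$, and the coordinate of $E\subset\F_2^N$ recording the monodromy translation $v_j$ — and confirm that the coordinate-dropping isomorphism $\F_2^{N-1}\cong E$ carries $e_j$ to precisely the $v_j$ whose translation realizes $\sigma_j$. Once the off-by-one between colours and coordinates and the recovery of the first coordinate $y_0$ are pinned down, commutativity is forced.
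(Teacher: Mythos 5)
Your proof is correct and follows essentially the same route as the paper's: the paper verifies $\abs\circ\phi=\rho\circ\abs$ by evaluating both composites on a general element of $\G_{N-1}$ written in normal form and comparing the resulting products of the unsigned $\sigma_j$, which is the same computation you perform generator by generator. Your explicit reduction to the central square and the careful tracking of the identification $\F_2^{N-1}\cong E$ (recovering $y_0$ and matching $e_j$ to the translation vector realizing $\sigma_j$) only make explicit what the paper leaves implicit.
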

\begin{proof}
Let
\[y=(-1)^sg_1^{y_1}\cdots g_{N-1}^{y_{N-1}}\in \G_{N-1}.\]
Then
\[\phi(y)=(-1)^s\zeta_1^{y_1}\cdots \zeta_{N-1}^{y_{N-1}}.\]
If $x_0$ is any fermion, then
\[\phi(y)(x_0)=\pm \sigma_1^{y_1}\cdots\sigma_{N-1}^{y_{N-1}}(x_0)\]
and
\[\abs(\phi(x))(|x_0|)=\sigma_1^{y_1}\cdots\sigma_{N-1}^{y_{N-1}}(|x_0|).\]
Likewise,
\[\abs(y)=(y_1,\ldots,y_{N-1})\]
and
\[\rho(\abs(y))(|x_0|)=\sigma_1^{y_1}\cdots \sigma_{N-1}^{y_{N-1}}(|x_0|).\]
\end{proof}

\begin{proposition}
The diagram (\ref{eqn:diagram1}) can be extended to the following commutative diagram:
\begin{equation}
\xymatrix{
&&1\ar[d]\ar[r]&1\ar[d]\\
&1\ar[r]\ar[d]&\{\pm 1\}\ar[r]^{\lambda}\ar[d]&\Sigma\ar[d]\\
1\ar[r]\ar[d]&K\ar[r]\ar[d]^{\abs}&\G_{N-1}\ar[r]^{\phi}\ar[d]^{\abs}&\mathcal{H}\ar[r]\ar[d]^{\abs}&1\ar[d]\\
1\ar[r]&C \ar[r]&\F_2^{N-1} \ar[r]^{\rho}\ar[d]&\mathcal{M} \ar[r]\ar[d]&1\\
&&1\ar[r]&1&
}
\label{eqn:diagram}
\end{equation}
\end{proposition}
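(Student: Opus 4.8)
The plan is to realize the two new maps in (\ref{eqn:diagram}), namely $\lambda\colon\{\pm 1\}\to\Sigma$ and the left vertical arrow $\abs\colon K\to C$, as restrictions of arrows already present in (\ref{eqn:diagram1}), to check that these restrictions are well-defined, and then to verify that the newly-drawn squares commute and that the exactness recorded by the new terminal objects $1$ holds. Everything outside the new outer row and column is already the content of the previous proposition.

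First I would take $\lambda$ to be $\phi$ restricted to $\{\pm 1\}$. To see that its image lies in $\Sigma=\ker(\abs\colon\mathcal{H}\to\mathcal{M})$, recall that $-1\in\mathcal{H}$ acts on $\pm F$ by negating every element, so the induced permutation $\abs(-1)$ on $|\pm F|$ is the identity and hence $-1\in\Sigma$. As $\phi(1)=1$ and $\phi(-1)=-1$, we get $\phi(\{\pm 1\})=\{\pm 1\}\subseteq\Sigma$, so $\lambda$ is well-defined; the square with vertices $\{\pm 1\},\Sigma,\G_{N-1},\mathcal{H}$ commutes automatically because $\lambda$ is by construction the (co)restriction of $\phi$ and the vertical maps are the kernel inclusions from (\ref{eqn:fclexact}) and (\ref{eqn:HM}).

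Next I would define the left vertical arrow as $\abs\colon\G_{N-1}\to\F_2^{N-1}$ restricted to $K$. To check $\abs(K)\subseteq C$, take $y\in K=\ker\phi$; commutativity of the middle square of (\ref{eqn:diagram1}), already proved, gives $\rho(\abs(y))=\abs(\phi(y))=\abs(1)=1$, so $\abs(y)\in\ker\rho=C$ by exactness of the bottom row (\ref{eqn:monodromy}). The resulting square with vertices $K,\G_{N-1},C,\F_2^{N-1}$ commutes because each of its maps is a restriction of a map in (\ref{eqn:diagram1}).

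Finally I would verify the exactness encoded by the corner $1$ at the upper left, which asserts that both $\lambda$ and $\abs|_K$ are injective. Both kernels equal $\{\pm 1\}\cap K$, and since $\phi(-1)=-1\neq 1$ we have $-1\notin K$, so $\{\pm 1\}\cap K=\{1\}$; hence $1\to\{\pm 1\}\xrightarrow{\lambda}\Sigma$ is exact at $\{\pm 1\}$ and $1\to K\xrightarrow{\abs}C$ is exact at $K$. The rows (\ref{eqn:kfclh}), (\ref{eqn:monodromy}) and columns (\ref{eqn:fclexact}), (\ref{eqn:HM}) are the short exact sequences already established, and the squares involving only the terminal objects $1$ commute trivially. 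The only non-formal input, and thus the crux, is the twin computation that $\abs(-1)$ is the identity (so $-1\in\Sigma$) and that $-1\notin K$ (so the upper-left corner is $\{1\}$); once these are in hand, the extension is forced by restriction of the structure already present in (\ref{eqn:diagram1}).
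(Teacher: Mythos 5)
Your proposal is correct and follows essentially the same route as the paper: both define $\lambda$ and the left vertical map as restrictions of $\phi$ and $\abs$, check well-definedness via the already-established commutativity (or, in your case, a direct computation that $\abs(-1)=\idn$), and reduce injectivity of both new arrows to the single fact that $-1\notin K$ because $\phi(-1)=-1$. Your observation that both kernels coincide with $\{\pm 1\}\cap K$ is a slightly tidier packaging of the paper's two separate monomorphism checks, but it is the same argument.
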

\begin{proof}
Note that the unlabeled maps $K\to \G_{N-1}$, $C\to \F_2^{N-1}$, $\{\pm 1\}\to \G_{N-1}$, and $\Sigma\to \mathcal{H}$ are all inclusion maps.

We first demonstrate the existence of the $\abs$ map from $K$ to $C$.  We first restrict $\abs$ to $K=\ker(\phi)$.  We need to show $\abs(K)\subset C$.  Then the map we want is simply the restriction of $\abs$ to $K$.

Let $k\in K$.  Then
\[\rho(\abs(k))=\abs(\phi(k))=\abs(1)=1\]
so $\abs(k)\in \ker(\rho)=C$.

The fact that $\abs:K\to C$ is the restriction of $\abs:\G_{N-1}\to \F_2^{N-1}$ shows that the following part of the diagram is commutative.
\[
\xymatrix{
K\ar[r]\ar[d]^{\abs}&\G_{N-1}\ar[d]^{\abs}\\
C \ar[r]&\F_2^{N-1}
}
\]

An analogous argument shows that $\lambda:\{\pm 1\}\to \Sigma$ exists and is the restriction of $\phi$ to $\{\pm 1\}$.  In turn, the fact that $\lambda$ is the restriction of $\phi$ shows that the following part of the diagram is commutative:
\[
\xymatrix{
\{\pm 1\}\ar[r]^{\lambda}\ar[d]&\Sigma\ar[d]\\
\G_{N-1}\ar[r]^{\phi}&\mathcal{H}
}
\]

To show that $\abs:K\to C$ is a monomorphism, suppose $k\in K$ so that $\abs(k)=1$.  Then in $\G_{N-1}$ we know that $k=1$ or $k=-1$.  But $-1\not\in K$, because $\phi(-1)=-1$.  Therefore $k=1$.

The fact that $\lambda$ is a monomorphism follows from the fact that $\phi(-1)=-1$.

The maps in the diagram involving the trivial group $1$ are the trivial maps.  Commutativity of the squares that involve $1$ follows.
\end{proof}

\subsection{Calculating $K$}
\begin{proposition}
If $N\equiv 0\pmod{4}$, then $K$ can be $\{1\}$, $\{1,\omega\}$, or $\{1,-\omega\}$.  Otherwise, $K=\{1\}$.
\label{prop:k0}
\end{proposition}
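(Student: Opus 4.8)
The plan is to identify $K = \ker(\phi)$ as a normal subgroup of $\G_{N-1}$ and then apply the full classification of normal subgroups from Theorem~\ref{thm:normal}. Since $\phi(-1) = -1 \neq 1$, the element $-1$ is \emph{not} in $K$. Therefore Theorem~\ref{thm:normal} immediately restricts the possibilities: $K$ is either trivial, or (only when $n = N-1 \equiv 3 \pmod 4$, i.e.\ when $N \equiv 0 \pmod 4$) one of the two order-two subgroups $\{1,\omega\}$ or $\{1,-\omega\}$ contained in the centre. This single observation essentially collapses the problem to the two cases stated in the proposition: the congruence condition $N-1 \equiv 3 \pmod 4$ is exactly $N \equiv 0 \pmod 4$, which is the only regime in which the non-trivial possibilities $\{1,\pm\omega\}$ can arise.

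\emph{First} I would record that $K \trianglelefteq \G_{N-1}$ (kernels are normal) and that $-1 \notin K$. \emph{Then} I would invoke Theorem~\ref{thm:normal} in the case $-1 \notin G$: this says $K = \{1\}$, or else $N-1 \equiv 3 \pmod 4$ with $K \in \{\{1,\omega\}, \{1,-\omega\}\}$. Translating the congruence gives precisely the dichotomy in the statement: when $N \not\equiv 0 \pmod 4$ we are forced into $K = \{1\}$, and when $N \equiv 0 \pmod 4$ the three listed options $\{1\}$, $\{1,\omega\}$, $\{1,-\omega\}$ are the only candidates permitted by the classification.

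\emph{The subtle point} — and what I expect to be the main thing requiring care rather than the main \emph{obstacle} — is that the proposition as stated only asserts these are the possibilities consistent with the group theory; it does \emph{not} yet claim which one actually occurs for a given Adinkra. So the proof at this stage is purely an application of the normal-subgroup classification and should not attempt to pin down $K$ using the code $C$ or the condition $h_1 \in C$; that determination is presumably the content of the subsequent argument that feeds into the main theorem. Concretely, the proof is little more than: $K$ is normal, $-1 \notin K$, apply Theorem~\ref{thm:normal}, and read off the congruence. The only place one must be careful is the index shift $n = N-1$, so that $n \equiv 3 \pmod 4$ becomes a statement about $N \bmod 4$; getting this translation right is the entire substance of the bookkeeping.

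\emph{One thing worth verifying} in passing is that $\omega = g_1 \cdots g_{N-1}$ genuinely is available as an element of $\G_{N-1}$ and that $\{1,\omega\}$ is a subgroup only when $\omega^2 = 1$ — but Proposition~\ref{prop:omegasquared} already guarantees $\omega^2 = 1$ precisely when $n \equiv 0,3 \pmod 4$, and the $n \equiv 3$ branch is exactly the case surviving here, so this is automatically consistent. No separate computation is needed beyond citing the earlier results.
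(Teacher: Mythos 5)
Your proof is correct and follows the paper's argument exactly: note that $K=\ker(\phi)$ is normal, that $\phi(-1)=-1$ forces $-1\notin K$, and apply Theorem~\ref{thm:normal} with $n=N-1$, translating $n\equiv 3\pmod 4$ into $N\equiv 0\pmod 4$. Your additional remarks about the bookkeeping and about not yet pinning down which case occurs are accurate but not needed beyond what the paper states.
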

\begin{proof}
We note that $K$ is a normal subgroup of $\G_{N-1}$, and since $\phi(-1)=-1$, we see that $-1\not\in K$.  The result is a consequence of Theorem~\ref{thm:normal}, applied to $n=N-1$.
\end{proof}

We now investigate under which conditions $K$ can be $\{1\}$, $\{1,\omega\}$, or $\{1,-\omega\}$.  Now if $N$ is not a multiple of $4$, then by Proposition~\ref{prop:k0}, $K$ must be $\{1\}$, so assume $N$ is a multiple of $4$.

\begin{proposition}
Suppose $N$ is even.  Then $\abs(\omega)=\abs(-\omega)=h_1\in E$ (recall that $h_1=11\cdots 1$ is the word of all $1$s).
\end{proposition}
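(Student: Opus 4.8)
The plan is to compute $\abs(\omega)$ directly from the definitions and then transport the answer across the isomorphism $E\cong\F_2^{N-1}$ fixed just after diagram~(\ref{eqn:diagram1}). Recall that $\omega=g_1\cdots g_{N-1}$ and that the map $\abs\colon\G_{N-1}\to\F_2^{N-1}$ of Proposition~\ref{prop:fclabs} is a homomorphism sending each generator $g_i$ to the $i$th standard basis vector $e_i$ and sending $-1$ to $0$. Writing $\F_2^{N-1}$ additively, the first step is then immediate: $\abs(\omega)=\sum_{i=1}^{N-1}e_i=(1,\ldots,1)$, the all-ones vector in $\F_2^{N-1}$.

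Second, I would dispatch $-\omega$ using the kernel computation in Proposition~\ref{prop:fclabs}: since $-1$ lies in $\ker(\abs)$, we get $\abs(-\omega)=\abs(-1)+\abs(\omega)=\abs(\omega)$. Thus $\omega$ and $-\omega$ share the same image, and it suffices to analyse $\abs(\omega)$.

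Third, I would unwind the identification $E\cong\F_2^{N-1}$. Its inverse sends $(y_1,\ldots,y_{N-1})$ to $(y_0,y_1,\ldots,y_{N-1})\in E$ with $y_0\equiv\sum_{i=1}^{N-1}y_i\pmod 2$. Feeding in the all-ones vector yields $y_0\equiv N-1\pmod 2$. This is exactly where the hypothesis is used: as $N$ is even, $N-1$ is odd, so $y_0=1$ and the resulting word is $(1,1,\ldots,1)=h_1\in\F_2^N$. A one-line check then confirms $h_1\in E$, since the coordinate sum of $h_1$ is $N$, which is even.

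The content here is short, so there is no serious obstacle; the only point that genuinely requires care is bookkeeping rather than computation. The quantity $\abs(\omega)$ literally lives in $\F_2^{N-1}$, whereas $h_1$ is a length-$N$ word, so the equality asserted in the statement is meaningful only through the chosen isomorphism $E\cong\F_2^{N-1}$, and that identification must be kept in view throughout. It is worth emphasising that the parity of $N$ is not cosmetic: it is precisely what forces the reconstructed zeroth coordinate to equal $1$. Were $N$ odd, the same computation would instead return the word $0\,1\cdots 1$, so the evenness hypothesis is genuinely needed and cannot be dropped.
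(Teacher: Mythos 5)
Your proof is correct and follows essentially the same route as the paper: compute $\abs(\omega)=(1,\ldots,1)\in\F_2^{N-1}$ and transport it through $(\pi|_E)^{-1}$ to land on $h_1\in E$. You add two details the paper leaves implicit — that $\abs(-\omega)=\abs(\omega)$ because $-1\in\ker(\abs)$, and that the evenness of $N$ is exactly what makes the reconstructed zeroth coordinate equal $1$ — both of which are worthwhile clarifications rather than deviations.
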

\begin{proof}
Recall that $\omega=\zeta_1\cdots\zeta_{N-1}$.  Then $\abs(\omega)=(1,\ldots,1)\in \F_2^{N-1}$.  Applying the isomorphism $(\pi|_E)^{-1}:\F_2^{N-1}\cong E$ described earlier, this corresponds to $h_1=(1,1,\ldots,1)\in E$.
\end{proof}

\begin{corollary}
If $h_1\not\in C$, then $K=\{1\}$.  If $h_1\in C$, then for every fermion $f$, either $\phi(\omega)(f)=f$ of $\phi(\omega)(f)=-f$.
\label{cor:homega}
\end{corollary}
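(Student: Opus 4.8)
The plan is to read off both assertions from the commutativity of diagram~(\ref{eqn:diagram}), which gives the identity $\abs\circ\phi=\rho\circ\abs$ of maps $\G_{N-1}\to\mathcal{M}$, combined with the preceding proposition that computes $\abs(\omega)=h_1$ whenever $N$ is even. The one observation that drives everything is that $\rho\colon\F_2^{N-1}\cong E\to\mathcal{M}$ is the quotient map $E\to E/C$, so $\rho(h_1)$ is the identity of $\mathcal{M}$ \emph{if and only if} $h_1\in C$. Feeding $\abs(\omega)=h_1$ through commutativity then yields
\[\abs(\phi(\omega))=\rho(\abs(\omega))=\rho(h_1),\]
so that $\abs(\phi(\omega))=1$ in $\mathcal{M}$ precisely when $h_1\in C$. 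Both bullet points will be extracted from this single equation.

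For the first claim, assume $h_1\not\in C$. If $N$ is not a multiple of $4$, Proposition~\ref{prop:k0} already gives $K=\{1\}$, so I may assume $N\equiv 0\pmod 4$; then Proposition~\ref{prop:k0} leaves only $K=\{1\}$, $K=\{1,\omega\}$, or $K=\{1,-\omega\}$. I would eliminate the last two by contradiction: if $\omega\in K$ then $\phi(\omega)=1$, while if $-\omega\in K$ then $\phi(-\omega)=\phi(-1)\phi(\omega)=-\phi(\omega)=1$, forcing $\phi(\omega)=-1$. In either case $\phi(\omega)=\pm 1$, so $\abs(\phi(\omega))=1$ because $\{\pm 1\}\subseteq\Sigma=\ker(\abs)$. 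This contradicts $\abs(\phi(\omega))=\rho(h_1)\neq 1$, so $K=\{1\}$.

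For the second claim, assume $h_1\in C$. Since $C$ is doubly even and $h_1$ has weight $N$, this forces $N\equiv 0\pmod 4$, in particular $N$ even, so the preceding proposition applies and $\abs(\phi(\omega))=\rho(h_1)=1$. Hence $\phi(\omega)\in\ker(\abs\colon\mathcal{H}\to\mathcal{M})=\Sigma$. The final step is to unwind what membership in $\Sigma$ says concretely: a signed permutation lies in $\ker(\abs)$ exactly when its underlying unsigned permutation on $|\pm F|=F$ is the identity, i.e. it fixes each orbit $\{f,-f\}$ setwise, so $\phi(\omega)(f)\in\{f,-f\}$ for every fermion $f$. I expect this translation to be the only genuinely delicate point, since it relies on the definition of $\abs\colon\mathcal{H}\to\mathcal{M}$ as passage to absolute values together with the fact that $\abs$ cannot distinguish $1$ from $-1$; everything else is routine diagram-chasing.
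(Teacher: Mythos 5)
Your proposal is correct and follows essentially the same route as the paper: both parts come down to the commutativity relation $\abs\circ\phi=\rho\circ\abs$ (equivalently, the already-established fact that $\abs$ carries $K$ into $C=\ker\rho$), combined with $\abs(\omega)=h_1$ for $N$ even and the list of possible kernels from Proposition~\ref{prop:k0}. The only cosmetic differences are that you inline the diagram chase rather than citing the map $\abs\colon K\to C$ directly, and you phrase the first bullet as a contradiction rather than a contrapositive.
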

\begin{proof}
If $\omega\in K$ or $-\omega\in K$, then $h_1=\abs(\omega)=\abs(-\omega)\in C$.

If $h_1\in C$, then we know that $\rho(\abs(\omega))(|f|)=|f|$, and so $
\phi(\omega(f))=f$ or $-f$.
\end{proof}

The question of whether or not $\omega\in K$ comes down to whether, for every fermion $f$, $\phi(\omega)(f)=f$.  Likewise $-\omega\in K$ if and only if, for every fermion $f$, $\phi(\omega)(f)=-f$.  In other words, when $h_1\in C$, the existence of a non-trivial kernel $K$ comes down to whether the signs obtained by applying $\phi(\omega)$ are consistent.  In principle this could be checked for every fermion in the Adinkra, but by the following proposition, we only need check each connected component of the Adinkra.

\begin{proposition}
Let $N$ be even.  If $f_1$ and $f_2$ are two fermions in the same connected component of an Adinkra, then $\phi(\omega)(f_1)=f_1$ if and only if $\phi(\omega)(f_2)=f_2$.  Likewise, $\phi(\omega)(f_1)=-f_1$ if and only if $\phi(\omega)(f_2)=-f_2$.
\label{prop:consistenth1}
\end{proposition}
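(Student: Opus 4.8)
The plan is to exploit the fact that, because $N$ is even, the element $\omega=g_1\cdots g_{N-1}$ is \emph{central} in $\G_{N-1}$, so that $\phi(\omega)$ is central in $\mathcal{H}$. Indeed, since $N$ is even, $n=N-1$ is odd, and Proposition~\ref{prop:centre} gives $\omega\in Z(\G_{N-1})$. As $\phi\colon\G_{N-1}\to\mathcal{H}$ is onto, the image of a central element is central: for any $h\in\mathcal{H}$ choose $x$ with $\phi(x)=h$, and then $\phi(\omega)h=\phi(\omega x)=\phi(x\omega)=h\phi(\omega)$. Thus $\phi(\omega)$ commutes with every element of $\mathcal{H}$.

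Next I would feed in the transitivity of the monodromy on each connected component. If $f_1$ and $f_2$ lie in the same connected component of the Adinkra, then the restriction of $\beta_X$ to the corresponding piece of $X$ is a connected cover, so $\mathcal{M}$ carries $|f_1|$ to $|f_2|$. Lifting through the epimorphism $\abs\colon\mathcal{H}\to\mathcal{M}$ produces $h\in\mathcal{H}$ with $\abs(h)(|f_1|)=|f_2|$, that is, $h(f_1)=\eta f_2$ for some sign $\eta\in\{\pm 1\}$.

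The conclusion then falls out of centrality together with the $\Z/2\Z$-equivariance of signed permutations. Applying $\phi(\omega)h=h\phi(\omega)$ to $f_1$ gives
\[\phi(\omega)\bigl(h(f_1)\bigr)=h\bigl(\phi(\omega)(f_1)\bigr).\]
Since $h(f_1)=\eta f_2$, the left-hand side equals $\eta\,\phi(\omega)(f_2)$. If $\phi(\omega)(f_1)=f_1$, the right-hand side is $h(f_1)=\eta f_2$, forcing $\phi(\omega)(f_2)=f_2$; if instead $\phi(\omega)(f_1)=-f_1$, the right-hand side is $-h(f_1)=-\eta f_2$, forcing $\phi(\omega)(f_2)=-f_2$. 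Interchanging $f_1$ and $f_2$ supplies the reverse implications, so both biconditionals hold. Note that when $h_1\notin C$ the element $\phi(\omega)$ fixes no fermion even up to sign, so both statements are vacuously true; the argument above subsumes this case, and no separate treatment is required.

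The main obstacle I anticipate is packaging the transitivity input cleanly, namely the assertion that two fermions lie in the same connected component of the Adinkra exactly when they lie in a common orbit of $\mathcal{M}$ (equivalently of $\mathcal{H}$), so that the connecting element $h$ is guaranteed to exist. Once centrality of $\phi(\omega)$ and the equivariance $h(-x)=-h(x)$ are in place, the remainder is the short formal manipulation displayed above.
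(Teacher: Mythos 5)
Your proposal is correct and is essentially the paper's own argument: both rest on the centrality of $\omega$ in $\G_{N-1}$ when $N-1$ is odd, plus the existence of a group element carrying $f_1$ to $\pm f_2$ (which the paper extracts from a path in the Adinkra and you extract from transitivity of the monodromy on the fiber of the connected cover — the same fact), followed by the same commutation computation. The only cosmetic difference is that you push the computation into $\mathcal{H}$ via centrality of $\phi(\omega)$, while the paper works upstairs in $\G_{N-1}$ and applies $\phi$ at the end.
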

\begin{proof}
Since $f_1$ and $f_2$ are in the same connected component, there is a path connecting $f_1$ to $f_2$, which corresponds to an element $g\in\G_{N-1}$ so that $\phi(g)(f_1)=(-1)^sf_2$.

Now $\omega$ is in the centre of $\G_{N-1}$.  So $\omega g = g\omega$.  Suppose $\phi(\omega)(f_1)=(-1)^tf_1$.  Then
\[\phi(\omega)(f_2)=\phi(\omega)((-1)^s \phi(g)(f_1))=(-1)^s\phi(g)(\phi(\omega)(f_1))=(-1)^{s+t}\phi(g)(f_1)=(-1)^tf_2.\]
\end{proof}

\begin{theorem}
Suppose $A$ is a connected Adinkra.  If $h_1\in C$, then either $K=\{1,\omega\}$ or $K=\{1,-\omega\}$.  If $h_1\not\in C$, then $K=\{1\}$.

For disconnected Adinkras, $K=\{1,\omega\}$ if and only if each connected component of the Adinkra has $K=\{1,\omega\}$.  Likewise, $K=\{1,-\omega\}$ if and only if each connected component of the Adinkra has $K=\{1,-\omega\}$.  We can only have $K=\{1\}$ if $h_1\not\in C$ (so that for some connected component, $K=\{1\}$) or if there are some connected components with $K=\{1,\omega\}$ and others with $K=\{1,-\omega\}$.
\end{theorem}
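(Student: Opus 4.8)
The plan is to assemble the structural facts already in hand---Proposition~\ref{prop:k0}, Corollary~\ref{cor:homega}, and Proposition~\ref{prop:consistenth1}---into a short case analysis, treating the connected Adinkra first and then reducing the disconnected case to an intersection of component kernels.

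For the connected statement, I would first dispose of the easy direction: if $h_1\notin C$, then Corollary~\ref{cor:homega} gives $K=\{1\}$ directly. So suppose $h_1\in C$. Because $C$ is doubly even, this forces $N\equiv 0\pmod 4$, hence $n=N-1\equiv 3\pmod 4$, and Proposition~\ref{prop:k0} restricts $K$ to one of $\{1\}$, $\{1,\omega\}$, $\{1,-\omega\}$. By Corollary~\ref{cor:homega} we have $\phi(\omega)(f)\in\{f,-f\}$ for every fermion $f$, and by Proposition~\ref{prop:consistenth1} connectedness forces this sign to be the same for all $f$. Thus either $\phi(\omega)=\idn$, so that $\omega\in K$ and $K=\{1,\omega\}$, or $\phi(\omega)=-\idn$, so that $\phi(-\omega)=\idn$, giving $-\omega\in K$ and $K=\{1,-\omega\}$; in both cases $K\neq\{1\}$. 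I would also record that $K$ can never contain both $\omega$ and $-\omega$, since their quotient is $-1\notin K$.

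For the disconnected case, the key observation is that $\phi\colon\G_{N-1}\to\mathcal{H}$ factors through the action on $\pm F=\bigsqcup_i \pm F_i$, where $F_i$ is the fermion set of the $i$-th connected component. Writing $\phi_i$ and $K_i$ for the analogous map and kernel attached to each component (each a connected Adinkra with the same $N$, hence sharing the same $\G_{N-1}$ and the same element $\omega=g_1\cdots g_{N-1}$), an element $g$ lies in $K$ exactly when $\phi(g)$ is the identity on every $\pm F_i$, i.e.\ when $g\in K_i$ for all $i$. Hence $K=\bigcap_i K_i$. Applying the connected result componentwise, each $K_i$ is one of $\{1\}$, $\{1,\omega\}$, $\{1,-\omega\}$, and the three claimed equivalences follow by inspecting this intersection: $K=\{1,\omega\}$ iff $\omega$ survives in every $K_i$, i.e.\ every $K_i=\{1,\omega\}$; symmetrically for $-\omega$; and $K=\{1\}$ precisely when no nontrivial element is common to all $K_i$, which happens iff some $K_i=\{1\}$ (forcing $h_1\notin C$ for that component) or some components give $\{1,\omega\}$ while others give $\{1,-\omega\}$.

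Since every ingredient is already proved, I do not expect a deep difficulty; the only point needing genuine care is the bookkeeping of the disconnected case---namely, justifying that the single element $\omega$ plays the same role in each component and that $K=\bigcap_i K_i$, together with the exclusion of $\{\omega,-\omega\}\subseteq K$. I anticipate that this identification, rather than any new idea, is where the small amount of real work lies.
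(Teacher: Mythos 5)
Your proposal is correct and follows essentially the same route as the paper: Proposition~\ref{prop:k0} to restrict the possibilities for $K$, Corollary~\ref{cor:homega} together with Proposition~\ref{prop:consistenth1} to pin down the sign of $\phi(\omega)$ on a connected component, and a componentwise reduction (which you phrase slightly more cleanly as $K=\bigcap_i K_i$) for the disconnected case. No gaps.
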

\begin{proof}
If $N$ is not a multiple of $4$, then $h_1\not\in C$ (since $C$ is doubly even) and $K=\{1\}$, and the theorem is proved.

Suppose $A$ is connected.  Let $f_0$ be a fermion of $A$.  If $h_1\in C$, then by Corollary~\ref{cor:homega}, $\phi(\omega)(f_0)=f_0$ or $\phi(\omega)(f_0)=-f_0$.

Suppose $\phi(\omega)(f_0)=f_0$.  By Proposition~\ref{prop:consistenth1}, since $A$ is a connected Adinkra, then for all fermions $f$, $\phi(\omega)(f)=f$ and $K=\{1,\omega\}$.  By a similar argument, if $\phi(\omega)(f_0)=-f_0$, $K=\{1,-\omega\}$.

Suppose $A$ is disconnected.  By restriction, $\omega\in K$ implies that for any connected component of $A$, $\omega\in K$.  Likewise, if $\omega\in K$ for each connected component of $A$, then for every fermion $f$, $\phi(\omega)(f)=f$.  Then $\omega\in K$ for $A$.  Likewise when $\omega$ is replaced by $-\omega$.

If $K=\{1\}$, then it is not the case that each connected component of the Adinkra has $K=\{1,\omega\}$, nor is it the case that each connected component has $K=\{1,-\omega\}$.  There conclusion follows.
\end{proof}

\begin{definition}
Let $A$ be an Adinkra.  Define $\chi_0$ for $A$ as follows:
\[
\chi_0=\begin{cases}
1,&\mbox{if $K=\{1,\omega\}$}\\
-1,&\mbox{if $K=\{1,-\omega\}$}\\
0,&\mbox{if $K=\{1\}$}
\end{cases}
\]
\end{definition}

\begin{remark}
This definition of $\chi_0$ generalizes that of Ref.~\cite{genomics1}, for connected Adinkras with $N=4$.  In that case, codes $C=\{0000\}$ and $C=\{0000,1111\}$ were considered.

When $\omega\in K$, then $h_1=1111\in C$, and for every fermion $f$, $\omega(f)=f$.  This means that a path beginning at $f$, following a colour sequence $(4,1,3,1,2,1)$ ends in at $f$ and has an even number of dashed edges.  Then by swapping the second and third colours, we see that the path from $f$ with colour sequence $(4,3,1,1,2,1)$ has an odd number of dashed edges.  The third and fourth colours cancel and gives us the colour sequence $(4,3,2,1)$.  This is the path that in Ref.~\cite{genomics1} was used to define $\chi_0=1$.  If $b$ is a boson, then a path starting at $b$ with the same colour sequence $(4,3,2,1)$ has an even number of dashed edges, according to the ideas in Ref.~\cite{cc}.

Likewise if $-\omega\in K$, the same argument shows that paths starting at a fermion with colour sequence $(4,3,2,1)$ have an even number of dashed edges, and paths starting at a boson with that colour sequence have an odd number of dashed edges, which is how Ref.~\cite{genomics1} defined $\chi_0=-1$.

If $K=\{1\}$, for connected Adinkras, $C=\{0000\}$, and Ref.~\cite{genomics1} defined $\chi_0$ in this case to be $0$.
\end{remark}

\subsection{Calculating $\mathcal{H}$}

\begin{theorem}
The signed monodromy group $\mathcal{H}$ is given by the following:
\begin{itemize}
\item If $K=1$, then $\mathcal{H}\cong \G_{N-1}$.

\item If $K\not=1$, then $\mathcal{H}\cong \G_{N-2}$.
\end{itemize}
\end{theorem}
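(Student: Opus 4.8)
The plan is to recognize that this theorem is an immediate consequence of the structural results already assembled, requiring essentially no new computation. The crucial input is the short exact sequence (\ref{eqn:kfclh}), which identifies $\mathcal{H}\cong \G_{N-1}/K$. Thus the entire question reduces to computing the quotient of $\G_{N-1}$ by the kernel $K$, and this kernel has already been determined completely in the previous subsection.

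First I would dispatch the case $K=\{1\}$. Here the epimorphism $\phi\colon \G_{N-1}\to\mathcal{H}$ has trivial kernel, hence is an isomorphism, so $\mathcal{H}\cong \G_{N-1}$ with no further argument. For the case $K\neq\{1\}$, I would invoke the computation of $K$: by Proposition~\ref{prop:k0} and the subsequent theorem, the kernel can be nontrivial only when $N\equiv 0\pmod 4$, in which case $K=\{1,\omega\}$ or $K=\{1,-\omega\}$. Setting $n=N-1$, the congruence $N\equiv 0\pmod 4$ is equivalent to $n\equiv 3\pmod 4$, which is precisely the hypothesis of Proposition~\ref{prop:quotientomega}. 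That proposition states that both $\G_{N-1}/\{1,\omega\}$ and $\G_{N-1}/\{1,-\omega\}$ are isomorphic to $\G_{N-2}$. Since $K$ is one of these two subgroups, I conclude $\mathcal{H}\cong \G_{N-1}/K\cong \G_{N-2}$.

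Since this is in effect a corollary of the earlier work, there is no substantive obstacle; the only point to watch is the bookkeeping on the index. I must be careful that the translation between $N\equiv 0\pmod 4$ and $n=N-1\equiv 3\pmod 4$ correctly aligns the running index with the hypothesis of Proposition~\ref{prop:quotientomega}, so that the quotient computation there may legitimately be applied. With that check in place, the proof is just a direct reading of the short exact sequence (\ref{eqn:kfclh}) through the facts already established, completing both bullet points of the statement.
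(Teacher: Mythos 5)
Your proposal is correct and follows essentially the same route as the paper: read $\mathcal{H}\cong \G_{N-1}/K$ off the short exact sequence, handle $K=\{1\}$ trivially, and apply Proposition~\ref{prop:quotientomega} when $K=\{1,\pm\omega\}$. Your extra check that $N\equiv 0\pmod 4$ gives $n=N-1\equiv 3\pmod 4$ is a nice explicit verification of a hypothesis the paper leaves implicit, but it does not change the argument.
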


\begin{proof}
We have generally that $\mathcal{H}\cong \G_{N-1}/K$.  If $K=1$ then $\mathcal{H}\cong \G_{N-1}$.  If $K\not=1$, then $K=\{1,\omega\}$ or $K=\{1,-\omega\}$, and by Proposition~\ref{prop:quotientomega}, $\mathcal{H}\cong \G_{N-2}$.
\end{proof}

\subsection{Calculating $\Sigma$}
We now turn our attention to $\Sigma$, which consists of those signed monodromies which give rise to trivial (unsigned) monodromies.  More generally, $\Sigma$ describes the extent to which an unsigned monodromy comes from many signed monodromies.

Note by the diagram (\ref{eqn:diagram}) that $\Sigma$ contains $\{\pm 1\}$ as a normal subgroup.

\begin{theorem}
The kernel $\Sigma$ of $\abs:\mathcal{H}\to\mathcal{M}$ is given by:
\begin{itemize}
\item If $K=1$, then $\Sigma\cong \F_2^{k+1}$.
\item If $K\not=1$, then $\Sigma\cong \F_2^k$.
\end{itemize}
\end{theorem}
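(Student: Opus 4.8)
The plan is to realize $\Sigma$ as a subquotient of $\G_{N-1}$ and then read off its isomorphism type from the group structure of a preimage. First I would use the commutativity of diagram (\ref{eqn:diagram}). Since $\abs\circ\phi=\rho\circ\abs$ on $\G_{N-1}$, a short chase shows that for $g\in\G_{N-1}$ we have $\phi(g)\in\Sigma$ precisely when $\rho(\abs(g))=1$, i.e. when $\abs(g)\in\ker\rho=C$; hence $\phi^{-1}(\Sigma)=\abs^{-1}(C)$, where $\abs\colon\G_{N-1}\to\F_2^{N-1}$ is the map of (\ref{eqn:fclexact}). Writing $\Pi=\abs^{-1}(C)$, the inclusion $K=\ker\phi\subseteq\Pi$ (because $\abs(K)\subseteq C$, as already shown) together with surjectivity of $\phi$ gives a surjection $\Pi\to\Sigma$ with kernel $K$, so $\Sigma\cong\Pi/K$. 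This is the object I would then compute.

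Next I would determine the order and group structure of $\Pi$. Restricting the short exact sequence (\ref{eqn:fclexact}) over $C$ yields $1\to\{\pm1\}\to\Pi\xrightarrow{\abs}C\to1$, so $|\Pi|=2\,|C|=2^{k+1}$. The heart of the argument is to prove that $\Pi$ is in fact \emph{elementary abelian}, hence $\Pi\cong\F_2^{k+1}$. Write a generic element as $(-1)^b g^{\vec x}$, where $g^{\vec x}=g_1^{x_1}\cdots g_{N-1}^{x_{N-1}}$ and $\vec x$ ranges over the image $\tilde{C}\subseteq\F_2^{N-1}$ of $C$ under the isomorphism $E\cong\F_2^{N-1}$. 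I would record two elementary computations in $\G_{N-1}$, both coming from $g_ig_j=-g_jg_i$ and $g_i^2=-1$: the squaring rule $(g^{\vec x})^2=(-1)^{m(m+1)/2}$ with $m=w(\vec x)$ (a weight-graded generalization of Proposition~\ref{prop:omegasquared}), and the commutator rule $g^{\vec x}g^{\vec y}=(-1)^{\,w(\vec x)w(\vec y)-\vec x\cdot\vec y}\,g^{\vec y}g^{\vec x}$, using integer weights and dot product (a generalization of Proposition~\ref{prop:commute}).

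The key input is that $C$ is doubly even. For a codeword $c=(c_0,\vec x)\in C\subseteq E$ one has $c_0\equiv w(\vec x)\pmod2$ and $w(c)\equiv0\pmod4$, which forces $w(\vec x)\equiv0$ or $3\pmod4$; in both residues $m(m+1)/2$ is even, so $(g^{\vec x})^2=1$, and hence every element of $\Pi$ squares to $1$. For commutativity I would use that a doubly even code is self-orthogonal (from $w(c+d)=w(c)+w(d)-2(c\cdot d)$ one gets $c\cdot d\equiv0\pmod2$). Combined with $c_0\equiv w(\vec x)$, $d_0\equiv w(\vec y)\pmod2$ this gives $w(\vec x)w(\vec y)\equiv c_0d_0\equiv\vec x\cdot\vec y\pmod2$, so the commutator sign is $+1$; since $-1$ is central, $\Pi$ is abelian. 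Thus $\Pi$ is an elementary abelian $2$-group of order $2^{k+1}$, i.e. $\Pi\cong\F_2^{k+1}$.

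Finally I would split into the two cases. If $K=\{1\}$ then $\Sigma\cong\Pi\cong\F_2^{k+1}$. If $K\neq\{1\}$ then $K=\{1,\omega\}$ or $\{1,-\omega\}$, and $\omega\in\Pi$ because $K\neq\{1\}$ forces $h_1\in C$ and $\abs(\omega)=h_1$; hence $\Sigma\cong\Pi/K$ is the quotient of an elementary abelian group of order $2^{k+1}$ by an order-$2$ subgroup, so $\Sigma\cong\F_2^{k}$. The main obstacle is the middle claim that $\Pi$ is elementary abelian; the rest is bookkeeping with the diagram and with orders. The crux is extracting \emph{both} the "squares to $1$" and the "commute" statements from double-evenness, via the weight-mod-$4$ dichotomy and self-orthogonality, which is exactly where the hypothesis on $C$ is genuinely used.
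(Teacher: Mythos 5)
Your proof is correct and follows essentially the same route as the paper: identify $\phi^{-1}(\Sigma)$ with $\abs^{-1}(C)$, observe this group has order $2^{k+1}$, show it is elementary abelian using that $C$ is doubly even, and quotient by $K$ when $K$ is nontrivial. If anything, your explicit sign bookkeeping (the weight-mod-$4$ dichotomy for squares and self-orthogonality for commutativity, tracking the dropped first coordinate) is more careful than the paper's own justification, which appeals to ``self-dual'' codes and an ``even number of common factors.''
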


\begin{proof}
Suppose $K=1$.  Then $\mathcal{H}\cong \G_{N-1}$ and $\phi$ is an isomorphism.  Under this isomorphism, $\Sigma$ is a normal subgroup of $\G_{N-1}$ that contains $-1$.  By the commutativity of the diagram, $\phi^{-1}(\Sigma)$ is the kernel of $\rho\circ\abs:\G_{N-1}\to\mathcal{M}$.  Since $C$ is the kernel of $\rho$, we have that the kernel of $\rho\circ\abs$ is $\abs^{-1}(C)$.

More explicitly, for each codeword $(x_1,\ldots,x_N)\in C$, there are two elements
\[\pm g_1^{x_2}\cdots g_{N-1}^{x_N}\]
in $\phi^{-1}(\Sigma)$, which becomes
\[\pm \zeta_1^{x_2}\cdots \zeta_{N-1}^{x_N}\]
in $\Sigma$.  All elements of $\Sigma$ are of this form, so that $|\Sigma|=2^{k+1}$.  By the fact that $C$ is doubly even, each such element squares to $1$.  Since doubly even codes are self-dual\cite{rCHVP}, it follows that any two such have an even number of factors in common, so that by Proposition~\ref{prop:commute}, $\Sigma$ is abelian.  These facts prove that $\Sigma\cong \F_2^{k+1}$.

Now suppose $K\not=1$.  Then $K=\{1,\omega\}$ or $K=\{1,-\omega\}$, and $\mathcal{H}\cong \G_{N-1}/K\cong \G_{N-2}$.

We begin as before by identifying the kernel of $\rho\circ\abs:\G_{N-1}\to \mathcal{M}$.  As before, this is $\abs^{-1}(C)$.  For every codeword $(x_1,\ldots,x_N)\in C$, we get two elements of this kernel of the form
\[\pm g_1^{x_2}\cdots g_{N-1}^{x_N}\]
Again, this set is an abelian group isomorphic to $\F_2^{k+1}$.

In this case, however, because of $K$, $\phi$ is not an isomorphism, and so to get $\Sigma$ we must quotient by $\omega$ or $-\omega$ (whichever is in $K$).  This shows that $\Sigma$ is an abelian group isomorphic to $\F_2^k$.

To consider more exactly how this fits in with $\mathcal{H}$, we trace this construction through the isomorphism in Proposition~\ref{prop:quotientomega}.  Use a generating set of $C$ so that at most one generator has $x_N=1$.  Let $C_0$ be the subcode that is generated by the other generators.  Then for every word $(x_1,\ldots,x_{N-1},0)\in C_0$, we have
\[\pm g_1^{x_2}\cdots g_{N-2}^{x_{N-1}}\]
in $\G_{N-2}$.  For $\mathcal{H}$, this is
\[\pm \zeta_1^{x_2}\cdots \zeta_{N-2}^{x_{N-1}}.\]

\end{proof}

To fit together our knowledge of $\Sigma$ and $K$ from a more abstract perspective, it will be helpful to apply the Snake Lemma\footnote{See Appendix~\ref{app:snake}} to the bottom two short exact sequences of our diagram (\ref{eqn:diagram}), which we show here:
\[
\xymatrix{
1\ar[r]&K\ar[r]\ar[d]^{\abs}&\G_{N-1}\ar[r]^{\phi}\ar[d]^{\abs}&\mathcal{H}\ar[r]\ar[d]^{\abs}&1\\
1\ar[r]&C \ar[r]&\F_2^{N-1} \ar[r]^{\rho}&\mathcal{M} \ar[r]&1\\
}
\]

The Snake Lemma then gives us the following exact sequence:
\[
\xymatrix{
1\ar[r]&\{\pm 1\} \ar[r]&
\Sigma \ar[r]&
C/\abs(K) \ar[r]&
1
}
\]
As we found earlier, $\Sigma$ is abelian and in fact either $\F_2^k$ or $\F_2^{k+1}$, so we can see that this sequence must split, and that
\[\Sigma\cong \{\pm 1\} \oplus C/\abs(K).\]

\begin{example}[$k=0$]  In this specific case, the code $C$ is trivial, i.e., $C=\{000\cdots 0\}$.  Then the commutative diagram becomes an isomorphism of short exact sequences:

\[
\xymatrix{
1\ar[r]\ar[d]^{\cong}&\{\pm 1\}\ar[r]\ar[d]^{\cong}&\G_{N-1}\ar[r]^{\abs}\ar[d]^{\cong}&\F_2^{N-1}\ar[r]\ar[d]^{\cong}&1\ar[d]^{\cong}\\
1\ar[r]&\Sigma \ar[r]&\mathcal{H} \ar[r]^{\abs}&\mathcal{M} \ar[r]&1\\
}
\]
(Note that we have rotated the diagram for typesetting reasons.)

Then we can view $\mathcal{H}$ as $\G_{N-1}$, and every signed monodromy is a monodromy with an extra $\pm 1$ sign.

The loops $w_1,\ldots,w_{N-1}$ give rise to signed monodromies $\zeta_1,\ldots,\zeta_{N-1}$, which generate $\mathcal{H}\cong \G_{N-1}$.  This is like the (unsigned) monodromies in $\mathcal{M}$, except that there are two signed monodromies for each unsigned monodromy, which differ due to an overall sign, which is influenced by the order in which the loops $w_i$ are traversed.  This overall sign is in $\Sigma\cong\{1,-1\}$.
\end{example}

\begin{example}[$N=4,\ C=\gen{(1111)}$ (connected Adinkra)]
In this case, $K=\{1,\omega\}$ or $K=\{1,-\omega\}$.  For this example, suppose we choose $K=\{1,\omega\}$.  The signed monodromy group $\mathcal{H}$ is generated by elements $\zeta_1,\zeta_2,\zeta_3$ and they have the same meaning as for the $4$-cube.  But since $\zeta_1\zeta_2\zeta_3=\omega$ is in $K$,  we also have $\zeta_3=\zeta_2\zeta_1$, with the result that the signed monodromy group is generated by $\zeta_1$ and $\zeta_2$, and $\mathcal{H}\cong \G_2\cong Q_8$.

\[
\xymatrix{
&&1\ar[d]\ar[r]&1\ar[d]\\
&1\ar[r]\ar[d]&\{\pm 1\}\ar[r]\ar[d]&\{\pm 1\}\ar[d]\\
1\ar[r]\ar[d]&\{1,\omega\}\ar[r]\ar[d]^{\abs}&\G_{3}\ar[r]^{\phi}\ar[d]^{\abs}&\mathcal{H}\cong \G_2 \ar[r]\ar[d]^{\abs}&1\ar[d]\\
1\ar[r]&\{0000,1111\} \ar[r]&\F_2^3 \ar[r]^{\rho}\ar[d]&\mathcal{M}\cong \F_2^2 \ar[r]\ar[d]&1\\
&&1\ar[r]&1&
}
\]
\end{example}

\begin{example}[$N=5,k=1$]
In this specific case, the kernel $K=1$. Let us consider the Adinkra obtained by quotienting the $5$-cube by the code $C=\gen{(11110)}$. 
In this case, the covering group is $\mathcal{M}=E/C$ is of order $2^{5-1-1}=8$. 
There are $8$ bosons and we have the following commutative diagram:
\[
\xymatrix{
&&1\ar[d]\ar[r]&1\ar[d]\\
&1\ar[r]\ar[d]&\{\pm 1\} \ar[r]\ar[d]&\{\pm 1,\pm\zeta_1\zeta_2\zeta_3\}\ar[d]\\
1\ar[r]\ar[d]&1\ar[r]\ar[d]^{\abs}&\G_{4}\ar[r]^{\phi}\ar[d]^{\abs}&\mathcal{H}\cong \G_4\ar[r]\ar[d]^{\abs}&1\ar[d]\\
1\ar[r]&\{00000, 11110\} \ar[r]&\F_2^4 \ar[r]^{\rho}\ar[d]&\mathcal{M}\cong \F_2^3 \ar[r]\ar[d]&1\\
&&1\ar[r]&1&
}
\]
The kernel $K$ is trivial, which means that the signed monodromy group is $\G_4$.  This group is generated by $\zeta_1, \ldots, \zeta_4$, which has $2^5=32$ elements (same as for the $5$-cube), but the monodromy group $\mathcal{M}$ is $\F_2^3$, which has $2^3=8$ elements.

The group $\Sigma$ has not only the usual $1$ and $-1$, but also $\zeta_1\zeta_2\zeta_3$ and $-\zeta_1\zeta_2\zeta_3$.  These correspond to traversing colours $1$, $2$, $3$, and $4$, and in terms of $\mathcal{M}$ sends every fermion to itself.  But in terms of the signed monodromy $\mathcal{H}$, 4 of the fermions are sent to their negatives\cite{cc}.
\end{example}

\section{Relations between $L_I$ matrices}
\label{sec:relations}
As one application of this, we consider relations between the various $L_I$ and $R_I$ matrices.

There are sometimes relations between $L_I$ and $R_I$ matrices.  There are consequences to the Garden algebra relations, for instance, $L_IR_I$ should be the identity, and $L_IR_J=-L_JR_I$, and so on.  But there are some relations that do not always happen in the Garden Algebra but nevertheless may happen in a specific representation.  For instance, when $N=4$, $k=1$, there is the example
\begin{align*}
L_1&=\left[\begin{array}{cccc}
1&0&0&0\\
0&1&0&0\\
0&0&1&0\\
0&0&0&1
\end{array}\right]\\
L_2&=\left[\begin{array}{cccc}
0&-1&0&0\\
1&0&0&0\\
0&0&0&1\\
0&0&-1&0
\end{array}\right]\\
L_3&=\left[\begin{array}{cccc}
0&0&0&-1\\
0&0&1&0\\
0&-1&0&0\\
1&0&0&0
\end{array}\right]\\
L_4&=\left[\begin{array}{cccc}
0&0&-1&0\\
0&0&0&-1\\
1&0&0&0\\
0&1&0&0
\end{array}\right]\\
\end{align*}
In addition, $R_1=L_1$, $R_2=-L_2$, $R_3=-L_3$, $R_4=-L_4$.

In this representation,
\[R_1L_2R_3=R_4\]
which is not generally true for arbitrary representations.

Note that as matrices, $L_1$ is the identity, so $L_1R_2=R_2$, but we do not consider such equations as relations because $L_1R_2$ goes from bosons to bosons, while $R_2$ goes from bosons to fermions.

\begin{theorem}
Non-trivial relations occur if and only if $h_1\in C$.
\end{theorem}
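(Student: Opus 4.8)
The plan is to read a \emph{relation} as an equality $W=W'$ between two words in the matrices $L_I,R_I$ that holds in the representation attached to the given Adinkra, and to declare it \emph{trivial} exactly when it is a formal consequence of the Garden algebra relations (\ref{eqn:susy1}), (\ref{eqn:susy2}), (\ref{eqn:susy3}); a \emph{non-trivial} relation is then one that holds in the representation but is not such a consequence. My first step is to reduce every relation to one between words that map $\pm F$ to $\pm F$. Since (\ref{eqn:susy1}) gives $R_i=\lambda_i^{-1}=\rho_i$, each generator is invertible, so for a relation between words of the same type (the restriction to equal domain and codomain is precisely the one flagged in the remark excluding $L_1R_2=R_2$) I may multiply both sides on the left and right by fixed generators without changing whether the relation holds: a boson-to-boson relation $W=W'$ is transformed into $\rho_1W\lambda_1=\rho_1W'\lambda_1$, and the two mixed types are turned into even ones by a single $\rho_1$ or $\lambda_1$. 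Because $\lambda_1\rho_1=1$ is itself a Garden relation, this transformation also preserves Garden-consequence, hence preserves non-triviality. Thus I may assume both sides are elements of $\mathcal H$, written via (\ref{eqn:susy1})--(\ref{eqn:susy3}) as $\pm$ words in the generators $\zeta_j=\rho_1\lambda_{j+1}$.

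Next I would identify the relevant free object. Lemma~\ref{lem:signedholorelations} shows the $\zeta_j$ obey the defining relations of $\G_{N-1}$, which is what produced the epimorphism $\phi\colon\G_{N-1}\to\mathcal H$. The crucial claim is that the group obtained from $\zeta$-words modulo \emph{only} the Garden relations is exactly $\G_{N-1}$, and not a proper quotient of it: it is a quotient of $\G_{N-1}$ by Lemma~\ref{lem:signedholorelations}, while the cubical Adinkra, whose code is trivial, has $h_1\notin C$ and hence $K=\{1\}$ by Corollary~\ref{cor:homega}, so its $L_I,R_I$ realize $\G_{N-1}$ faithfully. Therefore the Garden relations alone cannot force any collapse beyond $\G_{N-1}$. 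Consequently two words give a trivial relation precisely when they are equal in $\G_{N-1}$, and they give a relation in the given representation precisely when their images coincide in $\mathcal H\cong\G_{N-1}/K$; so a non-trivial relation exists if and only if $\ker\phi=K\neq\{1\}$.

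To finish, I would quote the computation of $K$: it equals $\{1\}$ when $h_1\notin C$ and is $\{1,\omega\}$ or $\{1,-\omega\}$ when $h_1\in C$. When $h_1\in C$ this yields the explicit non-trivial relation $\zeta_1\cdots\zeta_{N-1}=\pm1$, that is $\prod_j\rho_1\lambda_{j+1}=\pm1$, a word identity in the $L_I,R_I$ that is not a Garden consequence because $\omega\neq\pm1$ in $\G_{N-1}$; specializing to $N=4$, $C=\gen{(1111)}$ recovers the displayed example $R_1L_2R_3=R_4$. When $h_1\notin C$, instead $K=\{1\}$, so $\phi$ is an isomorphism, every relation holding in the representation already holds in $\G_{N-1}$, and is therefore trivial.

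The step I expect to be the main obstacle is establishing that the abstract Garden object is genuinely $\G_{N-1}$ and nothing smaller---that the Garden relations impose no hidden identities beyond those of $\G_{N-1}$---together with the bookkeeping that carries boson-type and mixed-type relations to fermion-holonomy relations without creating or destroying non-triviality. Realizing $\G_{N-1}$ faithfully through the cubical representation is the clean way past the first difficulty, and the invertibility supplied by (\ref{eqn:susy1}) handles the second.
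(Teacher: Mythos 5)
Your proposal is correct and follows essentially the same route as the paper: both reduce the existence of a non-trivial relation to the non-triviality of $K=\ker\phi$ (via the correspondence between matrix identities $R_{i_1}\cdots L_{i_k}=\pm 1$ and elements $\pm\zeta_{i_1-1}\cdots\zeta_{i_k-1}$ of $K$) and then invoke the computation that $K\neq\{1\}$ exactly when $h_1\in C$. The one substantive addition in your write-up is the explicit justification that the resulting identity is not a formal Garden-algebra consequence --- by observing that the cubical Adinkra realizes $\G_{N-1}$ faithfully, so the Garden relations alone impose nothing beyond the relations of $\G_{N-1}$ --- a point the paper asserts without argument.
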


\begin{proof}

If $h_1\in C$, then either $\omega\in K$ or $-\omega\in K$.  Then in $\mathcal{H}$,
\[\pm 1 =\zeta_1\cdots \zeta_{N-1}\]
which can be written as
\begin{align*}
\pm 1
&=\rho_1\lambda_2\prod_{i=1}^{(N-2)/2}\rho_1\lambda_{2i+1}\rho_1\lambda_{2i+2}\\
&=\rho_1\lambda_2\prod_{i=1}^{(N-2)/2} -\rho_1\lambda_1\rho_{2i+1}\lambda_{2i+2}\\
&=(-1)^{(N-2)/2}\rho_1\lambda_2\prod_{i=1}^{(N-2)/2}\rho_{2i+1}\lambda_{2i+2},\\
\end{align*}
or as matrices,
\[R_1L_2R_3L_4\cdots R_{N-1}L_N= \pm 1\]
This can be written as
\[L_N=\pm L_{N-1}\cdots R_4L_3R_2L_1.\]
This is a nontrivial relation in the Garden algebra.

Conversely, if it were possible to write $L_N$ in terms of other matrices, this would result in a product of the form
\[R_{i_1}\cdots L_{i_k}=\pm 1\]
which in turn can be manipulated, via the ideas above, to
\[\pm\zeta_{i_1-1}\cdots \zeta_{i_k-1}= 1\]
and thus a non-trivial element of $K$.
\end{proof}


\section{Acknowledgments}
Acknowledgment is given by all of the authors for their participation in the second annual Brown University Adinkra Math/Phys Hangout during 2017 and supported by the endowment of the Ford Foundation Physics Professorship.

K. Iga and K. Stiffler were partially supported by the endowment of the Ford Foundation Professorship of Physics at Brown University.  K. Iga was partially supported by the U.S. National Science Foundation grant PHY-1315155.

J. Kostiuk would also like to acknowledge the NSERC Alexander Graham Bell Canada Graduate Scholarship Program for supporting him during that time period. 

\appendix
\section{Summary of Commutative Diagrams and Exact Sequences}\label{a:SES}
This appendix includes notions like commutative diagrams and exact sequences that are common in algebraic topology, algebraic geometry, homological algebra, and many other such subjects.  This is included to help readers who are less familiar with these subjects.  For more information, see Ref.~\cite{MunkresAT}.

If $A$ and $B$ are groups, then the diagram
\[
\xymatrix{
A\ar[r]^f& B
}
\]
denotes a group homomorphism $f$ with domain $A$ and codomain $B$.  In this paper, we sometimes see many of these put together, for instance, like this:
\[
\xymatrix{
A\ar[r]^f\ar[d]^g& B\ar[d]^i\\
C\ar[r]^h& D
}
\]
We say this diagram is {\em commutative} if $i\circ f = h\circ g$.

When two or more such homomorphisms are aligned collinearly,
\begin{equation}
\xymatrix{
\cdots\ar[r]^{f_0}&A_1\ar[r]^{f_1}& A_2\ar[r]^{f_2} &A_3\ar[r]^{f_3}&\cdots\\
}
\end{equation}
we say the sequence is {\em exact} if for every $i$, the image of $f_i$ is equal to the kernel of $f_{i+1}$.  Note that in that case, if for some $j$, $A_j$ is the trivial group $1$, then $f_{j}$ must be a monomorphism and $f_{j-1}$ is an epimorphism.

The term {\em short exact sequence} refers to an exact sequence of four maps where the first and last groups are trivial:
\begin{equation}\label{e:SESGen}
\xymatrix{
1\ar[r]& A \ar[r]^\alpha & B \ar[r]^{\beta}& C \ar[r]&1
}
\end{equation}
The important features of a short exact sequence are, in no particular order
\begin{enumerate}
	\item $\alpha$ is a monomorphism
	\item $\beta$ is an epimorphism
	\item $\ker(\beta) = \text{im}(\alpha)$
	\item $\ker(\beta)\cong A$
	\item $\cok(\alpha)\cong C$
\end{enumerate}

\subsection{Snake Lemma}
\label{app:snake}
In Section~\ref{sec:proof2}, we referred to the following standard lemma from the theory of commutative diagrams:
\begin{lemma}[Snake Lemma]
Given two short exact sequences in the following commutative diagram,
\[
\xymatrix{
&1\ar[r]&A_1 \ar[r]\ar[d]^{\phi_1}&A_2 \ar[r]\ar[d]^{\phi_2}&A_3 \ar[r]\ar[d]^{\phi_3}& 1\\
&1\ar[r]&B_1 \ar[r]&B_2 \ar[r]&B_3 \ar[r]& 1\\
}
\label{eqn:diagramsnake1}
\]
there is a long exact sequence
\[
\xymatrix{
1\ar[r]&\ker \phi_1 \ar[r]&
\ker \phi_2 \ar[r]&
\ker \phi_3 \ar[r]^\delta&
\cok \phi_1 \ar[r]&
\cok \phi_2 \ar[r]&
\cok \phi_3 \ar[r]&1
}\]
\end{lemma}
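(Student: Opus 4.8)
The plan is to prove the Snake Lemma by the classical \emph{diagram chase}, pursuing representatives of elements through the two rows. To fix notation I write the top row as $1\to A_1\xrightarrow{f_1}A_2\xrightarrow{f_2}A_3\to 1$, the bottom row as $1\to B_1\xrightarrow{g_1}B_2\xrightarrow{g_2}B_3\to 1$, and keep the vertical maps $\phi_1,\phi_2,\phi_3$ as in the statement, so that commutativity reads $\phi_{i+1}\circ f_i=g_i\circ\phi_i$. The goal is to produce the six maps of the long exact sequence and then verify exactness at each interior position.

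First I would construct the two pairs of ``horizontal'' maps. The arrows $\ker\phi_1\to\ker\phi_2\to\ker\phi_3$ are the restrictions of $f_1$ and $f_2$: if $a\in\ker\phi_i$ then $\phi_{i+1}(f_i(a))=g_i(\phi_i(a))=1$, so $f_i(a)\in\ker\phi_{i+1}$, and the restriction is well defined. Dually, $g_1$ and $g_2$ descend to maps $\cok\phi_1\to\cok\phi_2\to\cok\phi_3$ on the quotients, since commutativity sends $\text{im}\,\phi_i$ into $\text{im}\,\phi_{i+1}$. Injectivity of $\ker\phi_1\to\ker\phi_2$ is inherited from injectivity of $f_1$, and surjectivity of $\cok\phi_2\to\cok\phi_3$ from surjectivity of $g_2$; these account for exactness at the two outer positions.

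The heart of the proof, and the step I expect to be the main obstacle, is the construction of the connecting homomorphism $\delta\colon\ker\phi_3\to\cok\phi_1$. Given $a_3\in\ker\phi_3$, I would pick a preimage $a_2\in A_2$ with $f_2(a_2)=a_3$, using surjectivity of $f_2$. Commutativity and $a_3\in\ker\phi_3$ yield $g_2(\phi_2(a_2))=\phi_3(f_2(a_2))=1$, so $\phi_2(a_2)\in\ker g_2=\text{im}\,g_1$; by injectivity of $g_1$ there is a unique $b_1\in B_1$ with $g_1(b_1)=\phi_2(a_2)$, and I set $\delta(a_3)=[b_1]\in\cok\phi_1$. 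The delicate point is independence of the choice of lift: two lifts of $a_3$ differ by an element of $\text{im}\,f_1$, and chasing this correction through the diagram shows the resulting $b_1$ changes only by an element of $\text{im}\,\phi_1$, hence is unchanged in $\cok\phi_1$. That $\delta$ is a homomorphism then follows because the lifts and the $g_1$-preimages may be chosen multiplicatively.

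Finally I would verify exactness at the four interior positions $\ker\phi_2$, $\ker\phi_3$, $\cok\phi_1$, $\cok\phi_2$, in each case by a short chase establishing $\text{im}\subseteq\ker$ and $\ker\subseteq\text{im}$; the two genuinely new verifications are at $\ker\phi_3$ and $\cok\phi_1$, where the zig-zag defining $\delta$ must be reconciled with the restricted and induced maps. I note that for general groups the cokernels are only guaranteed to be groups when the relevant images are normal; in our application~(\ref{eqn:diagram}) the bottom row lands in the abelian groups $\F_2^{N-1}$ and $\mathcal{M}$, so every image is automatically normal and the cokernels are honest groups, and the chase applies verbatim.
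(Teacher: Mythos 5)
Your proposal is correct and follows exactly the approach the paper takes: the paper's proof consists of the single remark that ``the proof is a matter of diagram chasing'' with a citation to Munkres, and your outline is precisely that standard chase (restricted maps on kernels, induced maps on cokernels, the zig-zag construction of $\delta$, well-definedness, and exactness at each position). Your added observation that cokernels of non-abelian groups require normality of the images, and that the bottom row in diagram~(\ref{eqn:diagram}) is abelian so the lemma applies, is a careful point the paper leaves implicit.
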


The proof is a matter of diagram chasing.  For instance, in Ref.~\cite{MunkresAT}, where it is called the serpent lemma, it is an exercise.

\bibliographystyle{plain}
\bibliography{susyrefs}

\def\rasp{\leavevmode\raise.45ex\hbox{$\rhook$}}
\begin{thebibliography}{10}

\bibitem{ablamowicz}
Rafa\l{} Ab\l{}amowicz, Manisha Varahagiri, and Anne~Marie Walley.
\newblock Spinor modules of {C}lifford algebras in classes ${N}_{2k-1}$ and
  ${\Omega}_{2k-1}$ are determined by irreducible nonlinear characters of
  corresponding {S}alingaros vee groups.
\newblock {\em Advances in Applied Clifford Algebras}, 28(2):51, 2018.

\bibitem{HoLoRmY2}
Mathew Calkins, Delilah E.~A. Gates, S.~James {Gates, Jr.}, and Kory Stiffler.
\newblock Adinkras, 0-branes, holoraumy and the {SUSY} {QFT}/{QM}
  correspondence.
\newblock {\em Int. J. Mod.\ Phys.}, A30(11), 2015.

\bibitem{r6-1}
Charles~F. Doran, Michael~G. Faux, S.~James {Gates Jr.}, Tristan H{\"u}bsch,
  Kevin~M. Iga, and Gregory~D. Landweber.
\newblock On graph-theoretic identifications of {A}dinkras, supersymmetry
  representations and superfields.
\newblock {\em Int. J. Mod. Phys.}, A22(5):869--930, 2007.

\bibitem{at}
Charles~F. Doran, Michael~G. Faux, S.~James {Gates Jr.}, Tristan H{\"u}bsch,
  Kevin~M. Iga, Gregory~D. Landweber, and Robert Miller.
\newblock Codes and supersymmetry in one dimension.
\newblock {\em Adv. Theor. Math. Phys.}, 15(6):1909--1970, 2011.

\bibitem{geom1}
Charles~F. Doran, Kevin Iga, Gregory Landweber, and Stefan Mendez-Diez.
\newblock Geometrization of {$N$}-extended 1-dimensional supersymmetry algebras
  {I}.
\newblock {\em Advances in Theoretical and Mathematical Physics},
  19(5):1043--1113, 2015.

\bibitem{geom2}
Charles~F. Doran, Kevin Iga, Gregory Landweber, and Stefan Mendez-Diez.
\newblock Geometrization of {$N$}-extended 1-dimensional supersymmetry algebras
  {II}.
\newblock {\em Advances in Theoretical and Mathematical Physics}, 22:565--613,
  2018.

\bibitem{cc}
Charles~F. Doran, Kevin Iga, and Gregory~D. Landweber.
\newblock {An application of Cubical Cohomology to Adinkras and Supersymmetry
  Representations}.
\newblock {\em AIHPD, European Mathematical Society}, 4(3):387--415, 2017.

\bibitem{rA}
Michael Faux and S.~James {Gates, Jr.}
\newblock Adinkras: A graphical technology for supersymmetric representation
  theory.
\newblock {\em Phys. Rev. D (3)}, 71:065002, 2005.

\bibitem{genomics1}
S.~James {Gates Jr.}, J.~Gonzales, B.~MacGregor, J.~Parker, R.~Polo-Sherk,
  V.G.J. Rodgers, and L.~Wassink.
\newblock {4D}, {$N = 1$} supersymmetry genomics {(I)}.
\newblock {\em Journal of High Energy Physics}, 0912:008, 2009.

\bibitem{Korrals}
S.~James {Gates Jr.}, Kevin Iga, Lucas Kang, Vadim Korotkikh, and Kory
  Stiffler.
\newblock Generating all 36,864 four-color {Adinkras} via signed permutations
  and organizing into $\ell$- and $\tilde{\ell}$-equivalence classes.
\newblock {\em Symmetry}, 11(1):120, 2019.

\bibitem{enuf}
S.~James {Gates, Jr.}, William~D. Linch, III, and Joseph Phillips.
\newblock When superspace is not enough.
\newblock Unpublished, November 2002.

\bibitem{rGR1}
S.~James {Gates, Jr.} and Lubna Rana.
\newblock A theory of spinning particles for large {$N$}-extended
  supersymmetry.
\newblock {\em Phys. Lett. B}, 352(1-2):50--58, 1995.

\bibitem{rGR2}
S.~James {Gates, Jr.} and Lubna Rana.
\newblock A theory of spinning particles for large {$N$}-extended
  supersymmetry. {II}.
\newblock {\em Phys. Lett. B}, 369(3-4):262--268, 1996.

\bibitem{HoLoRmY1}
S.~James {Gates, Jr.}, Tristan~H\" ubsch, and Kory Stiffler.
\newblock On {C}lifford-algebraic dimensional extension and {SUSY} holography.
\newblock {\em Int. J. Mod. Phys.}, A30(09), 2015.

\bibitem{rCHVP}
W.~Cary Huffman and Vera Pless.
\newblock {\em Fundamentals of Error-Correcting Codes}.
\newblock Cambridge Univ. Press, 2003.

\bibitem{rLM}
H.~Blaine {Lawson, Jr.} and Marie-Louise Michelsohn.
\newblock {\em Spin geometry}, volume~38 of {\em Princeton Mathematical
  Series}.
\newblock Princeton University Press, Princeton, NJ, 1989.

\bibitem{MunkresAT}
James Munkres.
\newblock {\em Elements of Algebraic Topology}.
\newblock Westview Press, 1993.

\bibitem{salingaros1}
Nikos Salingaros.
\newblock Realization, extension, and classification of certain physically
  important groups and algebras.
\newblock {\em Journal of Mathematical Physics}, 22(2):226--232, 1981.

\bibitem{Simon-rep}
Barry Simon.
\newblock {\em Representations of Finite and Compact Groups}, volume~10 of {\em
  Graduate Studies in Mathematics}.
\newblock AMS, 1996.

\bibitem{zhang}
Yan~{X} Zhang.
\newblock Adinkras for mathematicians.
\newblock {\em Transactions of the American Mathematical Society},
  366(6):3325--3355, 2014.

\end{thebibliography}

\end{document}